 \newcounter{enunciato}[section]
 \newtheorem{ittheorem}{Theorem}
 \newtheorem{itlemma}{Lemma}
 \newtheorem{itproposition}{Proposition}
 \newtheorem{itdefinition}{Definition}
 \newtheorem{itcorollary}{Corollary}
 \newtheorem{itconjecture}{Conjecture}
 \newenvironment{theorem}{\addtocounter{enunciato}{1}
 \begin{ittheorem}}{\end{ittheorem}}
 \newenvironment{lemma}{\addtocounter{enunciato}{1}
 \begin{itlemma}}{\end{itlemma}}
\newenvironment{proof}{\noindent {\em Proof}.\,\,}
{\hspace*{\fill }$\square $}
\def \ba {\begin{array}}
\def \ea {\end{array}}
\def \R {{\mathbb R}}
\begin{document}
\title{Heat flow from polygons}

\author{\renewcommand{\thefootnote}{\arabic{footnote}}
M.\ van den Berg
\footnotemark[1]
\\
\renewcommand{\thefootnote}{\arabic{footnote}}
P.\ B.\ Gilkey
\footnotemark[2]
\\
\renewcommand{\thefootnote}{\arabic{footnote}}
K.\ Gittins
\footnotemark[3]
}

\footnotetext[1]{School of Mathematics, University of Bristol, Fry Building, Woodland Road, Bristol BS8 1UG, United Kingdom,\, {\tt mamvdb@bristol.ac.uk} }

\footnotetext[2]{Mathematics Department, University of Oregon,
Eugene, OR 97403, USA,\, {\tt gilkey@uoregon.edu}}

\footnotetext[3]{Universit\'e de Neuch\^atel, Institut de Math\'ematiques, Rue Emile-Argand 11, CH-2000
Neuch\^atel, Switzerland,\, {\tt katie.gittins@unine.ch}}

\date{{22 July 2019}}

\maketitle

\begin{abstract}
We study the heat flow from an open, bounded set $D$ in $\R^2$ with a polygonal boundary $\partial D$.
The initial condition is the indicator function of $D$.
A Dirichlet $0$ boundary condition has been imposed on some but not all of the edges of $\partial D$.
We calculate the heat content of $D$ in $\R^2$ at $t$
up to an exponentially small remainder as $t\downarrow 0$.

\vskip 0.5truecm
\noindent
{\it AMS} 2010 {\it subject classifications.} 35K05, 35K20.\\
{\it Key words and phrases.} Heat content, Polygon.

\medskip
{\it Acknowledgements.} MvdB was supported by a Leverhulme Trust Emeritus Fellowship EM-2018-011-9. MvdB acknowledges hospitality by the Max Planck Institute for Mathematics, Bonn, and the Mathematical Institute, University of Neuch\^atel.
KG acknowledges support from the Max Planck Institute for Mathematics, Bonn, from October 2017 to July 2018.
The authors wish to thank the referee for helpful suggestions.

\end{abstract}

\section{Introduction}\label{Introduction}
Let $D$ be an open, bounded set in $\R^m$ with finite Lebesgue measure $|D|$, and with boundary $\partial D$.
We consider the heat equation
\begin{equation*}
\Delta u = \frac{\partial u}{\partial t},
\end{equation*}
and impose a Dirichlet $0$ boundary condition on $\partial D$. That is
\begin{equation*}
u(x;t)=0,\,x\in \partial D,\, t>0.
\end{equation*}
We denote the (weak) solution corresponding to the initial datum
\begin{equation*}
\lim_{t\downarrow 0}u(x;t)=1,\, x\in D,\,
\end{equation*}
by $u_D$. Then $u_D(x;t)$ represents the temperature at $x\in D$ at time $t$ when $D$ has initial temperature $1$, and its boundary is kept at fixed temperature $0$. The heat content of $D$ at $t$ is denoted by
\begin{equation*}
Q_D(t)=\int_Ddx\,u_D(x;t).
\end{equation*}
Both $u_D$ and $Q_D(t)$ have been the subjects of a thorough investigation going back to the treatise by Carslaw and Jaeger, \cite{CarslawJ}. For more recent accounts we refer to \cite{MvdBEBD,PG}. 

Many different versions and extensions have already been considered.
For example, the case where $\partial D$ is smooth,
and $A$ is an open subset of $\partial D$ on which a Neumann (insulating) boundary condition
has been imposed, while the temperature $0$ Dirichlet condition has
been maintained on $\partial D-A$. This Zaremba boundary condition for the heat equation has been considered in
\cite{MvdBGKK}, for example. Even in the case where no boundary condition has been imposed on $\partial D$, the corresponding heat content, denoted by $H_D(t)$, has (if $\partial D$ is smooth) an asymptotic series as $t\downarrow 0$ similar to the one for $Q_D(t)$, see \cite{MvdBPG}, for example.

In this paper we consider the heat flow out of $D$ into $\R^m$, where a Dirichlet $0$ boundary condition has been imposed on a closed subset $\partial D_{-}\subset \partial D$, and where no boundary condition has been imposed on $\partial D_{+}:=\partial D-\partial D_{-}$. That is
\begin{equation}\label{e5}
\Delta u= \frac{\partial u}{\partial t},
\end{equation}
with boundary condition
\begin{equation}\label{e5a}
u(x;t)=0,\,x\in \partial D_{-},\, t>0.
\end{equation}
We denote the solution corresponding to the initial datum
\begin{equation}\label{e5b}
\lim_{t\downarrow 0}u(x;t)={\bf 1}_{D}(x),\,\textup{almost everywhere},
\end{equation}
by $u_{D,\partial D_{-}}$. Here ${\bf 1}_{D}$ is the indicator function of $D$.
Then $u_{D,\partial D_{-}}$ is the weak solution of \eqref{e5}, \eqref{e5a} and \eqref{e5b}, where \eqref{e5a} holds at all regular points of $\partial D_{-}$.
The open set $D$ looses heat via
two mechanisms: (i) part of the boundary, $\partial D_{-}$, is at fixed temperature $0$, and cools the interior of $D$; (ii) since the complement of $D$ is at initial temperature $0$, heat flows over the open part of the boundary, $\partial D_{+}$.
The corresponding heat content is denoted by
\begin{equation*}
G_{D,\partial D_{-}}(t)=\int_D dx\, u_{D,\partial D_{-}}(x;t).
\end{equation*}
Let $A$ be a closed subset of $\R^m$, and let $p_{\R^m-A}(x,y;t), \, x\in \R^m-A,\, y\in \R^m-A,\, t>0$ be the heat kernel for the open set $\R^m-A$ with a Dirichlet $0$ boundary condition on $A$. This heat kernel is non-negative, symmetric in its space variables, and satisfies the heat semigroup property. Moreover, If $A$ and $B$ are closed subsets with $B\subset A$ then $p_{\R^m-A}(x,y;t)\le p_{\R^m-B}(x,y;t),\,x\in \R^m-A, y\in \R^m-A,\,t>0$. We refer to \cite{AG} for further details.
Then for $x\in D$
\begin{equation}\label{e7}
u_{D,\partial D_{-}}(x;t)=\int_D\, dy\,p_{\R^m-\partial D_{-}}(x,y;t).
\end{equation}
Let $\big(B(s), s\ge 0,\mathbb{P}_x,\, x\in \R^m\big)$ be Brownian motion associated with $\Delta$. Recall that $p_{\R^m-\partial D_{-}}$ is the transition density for Brownian motion on $\R^m$ with killing on $\partial D_{-}$. If $\tau_{\partial D_{-}}=\{\inf s\ge 0:B(s)\in \partial D_{-}\}$, then
\begin{equation*}
u_{D,\partial D_{-}}(x;t)=\mathbb{P}_x\big(\tau_{\partial D_{-}}\ge t, B(t)\in D\big),
\end{equation*}
which jibes with \eqref{e7}.

Since $D\subset \R^m-\partial D_{-}\subset\R^m$ , we have by monotonicity,
\begin{equation*}
0\le u_D(x;t)=u_{D,\partial D}(x;t)\le u_{D,\partial D_{-}}(x;t)\le u_{D,\emptyset}(x;t).
\end{equation*}
Hence
\begin{equation}\label{e9}
Q_D(t)\le G_{D,\partial D_{-}}(t)\le H_D(t),\, t>0.
\end{equation}
Using the spectral resolution for the Dirichlet heat kernel on $\R^m-\partial D_{-}$ it is possible to show
that all three heat contents in \eqref{e9}
are strictly decreasing in $t$.
Moreover, \eqref{e5b}
implies that for $1\le p<\infty$,
\begin{equation}\label{e10}
\lim_{t\downarrow 0}\Vert u_{D,\partial D_{-}}(\cdot;t)-
{\bf 1}_{D}(\cdot)\Vert_{L^p(\R^m-\partial D_{-})}=0.
\end{equation}
The short proof below is instructive. See also \cite{MvdBKG1}. By monotonicity,
\begin{equation*}
p_{\R^m-\partial D_{-}}(x,y;t)\le p_{\R^m}(x,y;t)= (4\pi t)^{-m/2}e^{-|x-y|^2/(4t)}.
\end{equation*}
Hence $0<u_{D,\partial D_{-}}(x;t)\le1$, and $|u_{D,\partial D_{-}}(x;t) - 1|\le 1$. Moreover,
\begin{align}\label{e12}
\lVert u_{D,\partial D_{-}}(\cdot;t)-
{\bf 1}_{D}(\cdot)\rVert_{L^p(\R^m)}^p&=\int_{D}dx \,
|u_{D,\partial D_{-}}(x;t)-1|^p+ \int_{\R^{m} - D}dx \, u_{D,\partial D_{-}}(x;t)^p
\nonumber \\ & \le \int_Ddx \, |u_{D,\partial D_{-}}(x;t)-1|+\int_{\R^m-D}dx \, u_{D,\partial D_{-}}(x;t) \nonumber \\  &
=\int_Ddx \, |u_{D,\partial D_{-}}(x;t)-1|+\int_{\R^m}dx \, u_{D,\partial D_{-}}(x;t)-\int_Ddx \,
u_{D,\partial D_{-}}(x;t).
\end{align}
By \eqref{e7}, Tonelli's theorem, and monotonicity,
\begin{equation}\label{e12a}
\int_{\R^m}dx \, u_{D,\partial D_{-}}(x;t)=\int_D\, dy\,\int_{\R^m}\,dx\,p_{\R^m-\partial D_{-}}(x,y;t)\le \int_D\, dy\,\int_{\R^m}\,dx\,p_{\R^m}(x,y;t)=\int_{D}dy.
\end{equation}
By \eqref{e12} and \eqref{e12a},
\begin{equation*}
\lVert u_{D,\partial D_{-}}(\cdot;t)-
{\bf 1}_{D}(\cdot)\rVert_{L^p(\R^m)}^p\le 2\int_Ddx \, |1-u_{D,\partial D_{-}}(x;t)|,
\end{equation*}
and \eqref{e10} follows by Lebesgue's Dominated Convergence theorem and \eqref{e5b}.

The main results of this paper are concerned with
the special case where $D$ is an open, bounded set in $\R^2$ with a polygonal boundary. Throughout we make the hypothesis that the vertices of $\partial D$ are the endpoints of exactly two edges, and that the collection of vertices $\mathcal{V}=\{V_1,V_2,\cdots\}$ is finite. We consider edges of two types: Dirichlet edges which include their endpoints, and open edges which include those vertices common to two open edges. The union of all Dirichlet edges, denoted by $\partial D_{-}$ as above, is a closed subset of $\R^2$, and we denote its length by $L(\partial D_{-})$. The union of all open edges, denoted by $\partial D_{+}$, is a relatively open subset of $\partial D$. We denote its length by $L(\partial D_{+})$. The length of $\partial D$ is given by
\begin{equation*}
L(\partial D)=L(\partial D_{-})+L(\partial D_{+}).
\end{equation*}

It was shown in \cite{MvdBSS2} that if all edges are of Dirichlet type, then
\begin{equation}\label{e14}
Q_D(t)= |D|- \frac{2}{\pi^{1/2}}L(\partial D)t^{1/2} +
\sum_{\gamma\in \mathcal{C}} c(\gamma)t+ O(e^{-q_D/t}),\, t\downarrow 0,
\end{equation}
where $q_D > 0$ is a constant which
depends on $D$ only, $c: (0,2\pi] \to \R$ is defined by
\begin{equation}\label{e15}
c(\gamma)=\int_0^{\infty}d\theta\frac{4\sinh((\pi-\gamma)\theta)}{\sinh(\pi \theta)\cosh(\gamma \theta)},
\end{equation}
$\mathcal{C}=\{\gamma_1,\gamma_2,...\}$ are the interior angles at the vertices $V_1,V_2,...$,
and $L(\partial D)$ is the total length of all Dirichlet edges.

On the other hand, if all edges are of open type, that is $\partial D_{-}=\emptyset$, then it was shown in \cite{MvdBKG2} that
\begin{align}\label{e16}
H_{D}(t) &= \vert D \vert - \frac{1}{\pi^{1/2}}L(\partial D)t^{1/2} +
\sum_{\beta\in\mathcal{B}} b(\beta)t+O(e^{-h_D/t}),\, t\downarrow 0,
\end{align}
where $h_D > 0$ is a constant which
depends on $D$ only, $b: (0,2\pi) \rightarrow \R$ is defined by
\begin{equation*}
b(\beta) =\begin{cases} \frac{1}{\pi} + \left(1-\frac{\beta}{\pi}\right)
\cot \beta, &\beta \in (0,\pi)\cup(\pi,2\pi);\\
0, &\beta=\pi, \end{cases},
\end{equation*}
$\mathcal{B}=\{\beta_1,\beta_2,...\}$ are the interior angles at the vertices $V_1,V_2,...$, and $L(\partial D)$ is the total length of all open edges.

The main result of this paper, Theorem \ref{the2} below, allows both open and Dirichlet edges. The collection of interior angles between two adjacent Dirichlet, respectively open, edges is denoted by $\mathcal{C}$, respectively $\mathcal{B}$. The collection of angles between an adjacent pair of open-Dirichlet edges (or Dirichlet-open edges) is denoted by $\mathcal{A}$ (see Figure~\ref{fig1}).
\begin{theorem}\label{the2} There exists a constant $g_D > 0$ depending on $D$ only such that
\begin{equation}\label{e18}
G_{D,\partial D_{-}}(t)=|D|-\frac{1}{\pi^{1/2}}\bigg(2L(\partial D_{-})+L(\partial D_{+})\bigg)t^{1/2}+\big(\sum_{\gamma\in \mathcal{C}}c(\gamma)+\sum_{\beta\in \mathcal{B}}b(\beta)+\sum_{\alpha\in \mathcal{A}}a(\alpha)\big)t+O(e^{-g_D/t}),\, t\downarrow 0,
\end{equation}
where $a :(0,2\pi)\mapsto \R$ is given by
\begin{equation}\label{e19}
a(\alpha)=-\frac34+\frac14\int_0^{\infty}d\theta\, \frac {4(\sinh((\pi-\frac{\alpha}{2})\theta))^2 - (\sinh((\pi-\alpha)\theta))^2}{\big(\sinh(\pi\theta/2)\big)^2 \cosh(\pi\theta)}.
\end{equation}
\end{theorem}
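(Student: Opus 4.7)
The strategy is the standard localisation argument. Choose $\rho>0$ small enough that the disks $B(V_i,\rho)$ are pairwise disjoint and each meets only the two edges incident to $V_i$. Decompose $D$ into (a) a bulk region at distance $\ge\rho/2$ from $\partial D$, (b) tubular neighbourhoods of the edges bounded away from the vertices, and (c) the sectors $D\cap B(V_i,\rho)$ at each vertex. The bulk contributes $|\textrm{bulk}|$ to $G_{D,\partial D_{-}}(t)$ up to $O(e^{-\rho^2/(16t)})$, by the Gaussian tail estimate on $1-u_{D,\partial D_{-}}$ away from $\partial D$. Each tubular edge piece, via a Brownian-motion coupling with the corresponding half-plane problem, contributes $-\frac{2}{\pi^{1/2}}\ell t^{1/2}$ if Dirichlet or $-\frac{1}{\pi^{1/2}}\ell t^{1/2}$ if open, up to exponentially small error. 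This recovers the area and the $t^{1/2}$ term of \eqref{e18}.

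Each vertex sector $D\cap B(V_i,\rho)$ is then replaced by its model infinite wedge $W$ of matching opening angle and boundary labels. A standard Brownian-motion coupling shows
\[
\int_{D\cap B(V_i,\rho)}u_{D,\partial D_{-}}(x;t)\,dx=\int_{W\cap B(V_i,\rho)}u_{W,\partial W_{-}}(x;t)\,dx+O(e^{-c\rho^2/t}),
\]
because the two problems agree unless a path travels distance $\gtrsim\rho$ within time $t$. For Dirichlet-Dirichlet and open-open vertices the resulting wedge heat-content coefficients are already supplied by \eqref{e14}--\eqref{e16}, yielding the terms $c(\gamma)$ and $b(\beta)$. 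What remains is to establish the corner coefficient $a(\alpha)$ for the mixed wedge.

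For a wedge $W_\alpha$ of opening $\alpha$ with Dirichlet edge $L_{-}$ and open edge $L_{+}$, the relevant quantity is
\[
G_{W_\alpha,L_{-}}(t)=\int_{W_\alpha}dx\int_{W_\alpha}dy\,p_{\R^2-L_{-}}(x,y;t).
\]
Using polar coordinates cut along $L_{-}$ so that $\theta\in(0,2\pi)$, the kernel admits the Fourier-Bessel expansion
\[
p_{\R^2-L_{-}}(r_1,\theta_1;r_2,\theta_2;t)=\frac{e^{-(r_1^2+r_2^2)/(4t)}}{2\pi t}\sum_{n=1}^{\infty}\sin\!\bigl(\tfrac{n\theta_1}{2}\bigr)\sin\!\bigl(\tfrac{n\theta_2}{2}\bigr)I_{n/2}\!\bigl(\tfrac{r_1 r_2}{2t}\bigr).
\]
Performing the angular integrals over $(0,\alpha)^2$ gives factors of the form $\tfrac{16}{n^2}\sin^4(n\alpha/4)$; one then converts the sum over $n$ into a spectral contour integral by a Sommerfeld-type transform and evaluates the double radial integral using the Mellin / Kontorovich-Lebedev representation of the modified Bessel functions. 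Isolating the $O(t)$ term by subtracting the area and $t^{1/2}$ contributions already identified, and simplifying the resulting hyperbolic-trigonometric integrand, produces \eqref{e19}.

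The principal obstacle is this last step: organising the subtraction of the lower-order terms against the Bessel-series representation and simplifying the contour integral into the symmetric form in \eqref{e19}, which combines the scale $\alpha/2$ (natural under reflection across the open edge $L_{+}$) with the scale $\alpha$ (the original opening). A useful sanity check is that the value of $a(\alpha)$ at $\alpha=\pi$ should coincide with the heat-content defect at a single junction between a Dirichlet and an open segment lying on a straight boundary, a quantity that can be computed directly in the half-plane.
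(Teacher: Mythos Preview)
Your localisation strategy is essentially the one in the paper: partition $D$ near each vertex into a circular sector, approximate there by the model wedge, handle edge strips by half-plane solutions, and use Kac's principle to control the bulk. The paper is slightly more careful with the geometry, inserting explicit ``cusp'' regions between the sectors and the edge rectangles and showing that their contributions cancel against matching terms produced by the finite-radius sector computation; your outline glosses over this bookkeeping.

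The substantive difference---and the gap---is in the computation of $a(\alpha)$. You propose to start from the time-domain Fourier--Bessel series for $p_{\R^2-L_-}$, sum, and only then pass to a Sommerfeld/Kontorovich--Lebedev integral. The paper takes the opposite order: it never works in the $t$-domain for this step. It applies the Laplace transform first, uses the closed-form Kontorovich--Lebedev representation of the Green's function $\hat p_{W_{2\pi}}(\cdot,\cdot;s)$, performs the angular and radial integrations in the $s$-domain (where they become elementary identities for $K_{i\theta}$), and then inverts term by term. The authors remark explicitly that they were unable to extract the angle coefficient from any of the known closed-form heat-kernel expressions for the wedge---which includes series of the type you write---and that the Laplace-domain route, following D.~B.~Ray, is what makes the calculation tractable. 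So your acknowledged ``principal obstacle'' is a real one, and the paper circumvents rather than overcomes it.

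Two smaller points. First, the quantity you write as $G_{W_\alpha,L_-}(t)=\int_{W_\alpha}\int_{W_\alpha}p_{\R^2-L_-}$ is divergent; the computation must be carried out on a finite sector of radius $R$ (one radial integral runs to $R$, the other to $\infty$), and this is also what generates the cusp corrections mentioned above. Second, to obtain a remainder $O(e^{-g_D/t})$ rather than merely $O(t^{3/2})$, the paper has to invert every $s$-domain term exactly and bound the tail integrals pointwise in $t$; this requires a case analysis in $\alpha$ that your outline does not address.
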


\begin{figure}[!ht]
 \begin{center}
 \begin{tikzpicture}
\draw[dashed, ultra thick] (0,0) -- (0.5,1.75) -- (-2,2.5);
\draw[ultra thick] (-2,2.5) -- (2,5.5) -- (7,3);
\draw[dashed, ultra thick] (7,3) -- (5,0);
\draw[ultra thick] (5,0) -- (0,0);
\draw (0.75,0) arc(0:74.0546:0.75);
\draw (4.25,0) arc(180:56.31:0.75);
\draw (6.58397,2.37596) arc(236.3099:153.4349:0.75);
\draw (1.4,5.05) arc(-143.1301:-26.5651:0.75);
\draw (0.29396,1.02886) arc(-105.9454:163.30076:0.75);
\draw (-1.2816,2.2845) arc(-16.69924:36.86897:0.75);
\node at (3.5,2.5) {$D$};
\node at (0.9,0.5) {$\alpha_1$};
\node at (1.5,1.95) {$\beta_1$};
\node at (-0.9,2.65) {$\alpha_2$};
\node at (2,4.5) {$\gamma_1$};
\node at (6,2.75) {$\alpha_3$};
\node at (4.5,0.85) {$\alpha_4$};
\end{tikzpicture}
 \caption{An open set $D \subset \R^2$ with polygonal boundary:
 the Dirichlet, respectively open, edges are displayed as solid, respectively dashed, lines.}
 \label{fig1}
  \end{center}
 \end{figure}
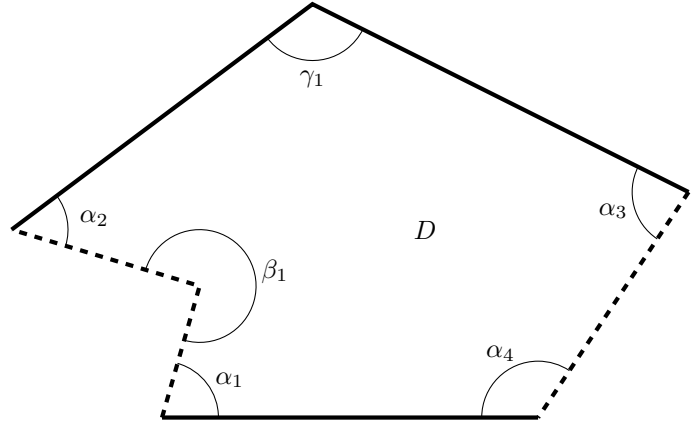

The main results of both \cite{MvdBKG2} and \cite{MvdBSS2} hold for more general polygons. For example, vertices with just one edge or more than two are allowed. If a vertex supports just one edge, then the corresponding angle equals $2\pi$
and will contribute $c(2\pi)$ to the coefficient of $t$ in \eqref{e14}. That edge counts double in the total length of Dirichlet edges. Indeed, that edge cools $D$ at both sides. In general, the contribution from
the angles to the coefficient $t$ in \eqref{e14} is additive. The Dirichlet condition on
the edges implies this additivity. That does not hold true in the setting of open edges. If two wedges with angles, say $\beta_1$ and $\beta_2$, are supported by the same vertex, then there is an additional contribution to the coefficient of $t$, depending on $\beta_1, \beta_2$ and the angle between these two wedges (see \cite{MvdBKG2}). Furthermore, if a vertex supports just one edge, then the corresponding angle, and the corresponding edge contribute $0$ to the coefficients of $t$ and $t^{1/2}$ respectively. Indeed, heat does not flow over this edge into $\R^2-D$. We shall not consider these cases, and we assume that each vertex supports precisely two edges.

The proof of Theorem \ref{the2} is based on a partition of $D$ combined with model computations, as are the proofs of \eqref{e14} and \eqref{e16}. The main computation is the one for circular sectors with radius $R$ with
opening angles $\gamma, \beta, \alpha$ depending on whether one deals with a Dirichlet-Dirichlet wedge, an open-open wedge, or, as in this paper, a Dirichlet-open wedge. The geometry of the Dirichlet-open wedge is one edge on which a Dirichlet boundary condition has been imposed, and an open edge separated by
angle $\alpha$ (see Figure~\ref{fig2}). Our main result for such a circular sector is the following.

\begin{figure}[!ht]
 \begin{center}
 \begin{tikzpicture}
\draw[dashed, ultra thick] (0,0) -- (4,3);
\draw[ultra thick] (0,0) -- (5,0);
\draw (1,0) arc(0:36.86989765:1);
\node at (0.5,-0.3) {$R$};
\node at (1.2,0.35) {$\alpha$};
\end{tikzpicture}
 \caption{A Dirichlet-open wedge with angle $\alpha$: the Dirichlet, respectively open, edge is displayed as a solid, respectively dashed, line.}
 \label{fig2}
  \end{center}
 \end{figure}
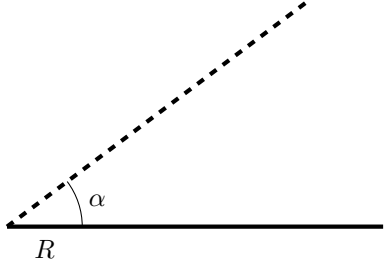

\begin{theorem}\label{the3}
Let $W_{\alpha}=\{(r,\phi): r>0,\, 0<\phi<\alpha\}$ in polar coordinates, and let $ u_{W_{\alpha}}((r,\phi);t)$ be the solution of the heat equation with a Dirichlet $0$ boundary condition on the positive $x_1$ axis, and initial data ${\bf 1}_{W_{\alpha}}$. Then, in polar coordinates, we have
\begin{align}\label{e20}
\int_0^{R}  dr\,r\,\int_0^{\alpha}d\phi \,  u_{W_{\alpha}}((r,\phi);t)=&\frac12\alpha R^2-\frac{3}{\pi^{1/2}}Rt^{1/2}+a(\alpha)t\nonumber \\ &+\frac{3Rt^{1/2}}{\pi^{1/2}}\int_1^{\infty}\frac{dv}{v^2}\,\int_0^1 d\zeta\frac{\zeta}{(1-\zeta^2)^{1/2}}e^{-R^2\zeta^2v^2/(4t)}\nonumber\\ &+O(e^{-m_{\alpha}R^2  / (4t)}),\, t \downarrow 0,
\end{align}
where $m_{\alpha} >0$ is a constant which depends on $\alpha$ only.
\end{theorem}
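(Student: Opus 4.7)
The plan is to use the representation $u_{W_\alpha}((r,\phi);t) = \int_{W_\alpha} p_\Omega((r,\phi),y;t)\,dy$, where $p_\Omega$ is the Dirichlet heat kernel on the slit plane $\Omega = \R^2 \setminus \{(x_1,0): x_1 \geq 0\}$, viewed as a wedge of opening $2\pi$ with Dirichlet data on both sides of the slit. In polar coordinates this kernel has the classical Bessel-series representation
\begin{equation*}
p_\Omega((r_1,\phi_1),(r_2,\phi_2);t) = \frac{e^{-(r_1^2+r_2^2)/(4t)}}{2\pi t}\sum_{n=1}^\infty I_{n/2}\!\left(\frac{r_1 r_2}{2t}\right)\sin\!\left(\frac{n\phi_1}{2}\right)\sin\!\left(\frac{n\phi_2}{2}\right).
\end{equation*}
Substituting this into the left-hand side of \eqref{e20} and evaluating the angular integrations via $\int_0^\alpha \sin(n\psi/2)\,d\psi = (4/n)\sin^2(n\alpha/4)$ reduces the problem to a series in $n$ of double radial integrals of the form $\int_0^R r\,dr \int_0^\infty r'\,dr'\,e^{-(r^2+{r'}^2)/(4t)}I_{n/2}(rr'/(2t))$.

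Next I would identify the main terms by comparison with two model problems: the half-plane $\{x_2 > 0\}$ with Dirichlet condition on $\{x_2=0\}$, whose solution is $\mathrm{erf}(x_2/(2\sqrt t))$, and the same half-plane with no boundary condition, whose solution is $\tfrac12(1+\mathrm{erf}(x_2/(2\sqrt t)))$. The area $\tfrac12\alpha R^2$ comes from the bulk contribution where $u_{W_\alpha} \approx 1$. Integrating $\mathrm{erfc}(r\sin\phi/(2\sqrt t))$ over the sector $W_\alpha \cap \{r < R\}$ in Cartesian coordinates (expanding $\sqrt{R^2 - x_2^2} = R - x_2^2/(2R) - \cdots$ for small $x_2$) produces the Dirichlet edge loss $-\tfrac{2}{\sqrt\pi}Rt^{1/2}$; the analogous open-edge computation produces $-\tfrac{1}{\sqrt\pi}Rt^{1/2}$. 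The explicit integral $\frac{3R t^{1/2}}{\sqrt\pi}\int_1^\infty \frac{dv}{v^2}\int_0^1 \frac{\zeta\,d\zeta}{\sqrt{1-\zeta^2}}e^{-R^2\zeta^2v^2/(4t)}$ is the arc-curvature correction of these two edge-strip integrations: after the substitutions $r = Rv$, $\zeta = \sin\psi$ and using the Gaussian-tail representation $\mathrm{erfc}(s) = \tfrac{2}{\sqrt\pi}\int_s^\infty e^{-u^2}du$, one arrives at precisely this form.

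The delicate step is extracting the vertex coefficient $a(\alpha)$. Returning to the Bessel-series representation, I would isolate its small-$t$ asymptotic behaviour in a ball of radius of order $\sqrt t$ about the origin. Writing $4\sin^2(n\alpha/4) = 2 - 2\cos(n\alpha/2)$ splits the sum into two pieces, which I would convert into Kontorovich-Lebedev or Mellin-Barnes integrals over $\theta \in (0,\infty)$. After extracting the $t$-coefficient, the two split pieces combine into the numerator $4\sinh^2((\pi-\alpha/2)\theta) - \sinh^2((\pi-\alpha)\theta)$ of \eqref{e19}, while the denominator $(\sinh(\pi\theta/2))^2\cosh(\pi\theta)$ reflects the slit-plane ($2\pi$-wedge) geometry combined with the free boundary on the open edge; the additive $-\tfrac{3}{4}$ constant absorbs the residual $t$-coefficients from the two edge-model integrations. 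The exponential remainder $O(e^{-m_\alpha R^2/(4t)})$ follows from standard Gaussian-decay tail estimates on both the Bessel series and on the $\mathrm{erfc}$ corrections away from the edges. The main obstacle is this vertex step: rigorously justifying the contour deformation, establishing uniform estimates on $I_{n/2}$ as $n\to\infty$ and $t\to 0$, and verifying the combinatorial cancellations that produce exactly the closed form \eqref{e19}.
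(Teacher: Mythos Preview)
Your proposal is a plan rather than a proof: the vertex step, which you yourself flag as ``the main obstacle'', is not carried out. You describe converting the Bessel series into Kontorovich--Lebedev or Mellin--Barnes integrals, but you do not execute any of the contour work, the uniform Bessel estimates, or the cancellations that would produce \eqref{e19}. More seriously, the paper explicitly reports (Section~\ref{sec4}) that the known closed-form heat-kernel expressions for wedges --- including a specific one for the $2\pi$ wedge --- do \emph{not} yield a workable route to the angle coefficient; the authors of \cite{MvdBSS2} could not extract $c(\gamma)$ from them either. Your Bessel-series starting point is in exactly this category, and your eventual suggestion to pass to a Kontorovich--Lebedev integral is effectively abandoning the Bessel series and switching to the method the paper actually uses.

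The paper's approach is different in a crucial way: it never works with $p_\Omega$ in the $t$-domain at all. It takes the Laplace transform first, uses the Kontorovich--Lebedev representation \eqref{e32a} of the Green's function $\hat p_{W_{2\pi}}$, and computes the quadruple integral \eqref{e32b} exactly in $s$. The angular integration collapses to an explicit combination of hyperbolic functions (the $C_1+C_2+C_3$ decomposition in \eqref{e33}), and the radial integrals are handled via \eqref{e34}, \eqref{e35}, \eqref{e35a}. Each resulting $s$-expression is then inverted back to $t$ term by term using \eqref{e35d}; the exponential remainders come from exact inversions of tail integrals over $a_1>R$, not from asymptotic tail bounds on a series. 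The paper stresses that there is no Tauberian shortcut here: small-$t$ behaviour cannot be read off from large-$s$ behaviour, so every piece must be inverted exactly.

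There is also a structural issue in your edge-term discussion. You propose to recover the $-3\pi^{-1/2}Rt^{1/2}$ and cusp terms by integrating half-plane model solutions over strips near the two edges. But inside the single sector of Theorem~\ref{the3} the solution $u_{W_\alpha}$ is not close to a half-plane solution near the vertex; replacing it by one there is precisely what introduces the unknown $a(\alpha)$. The domain-partition logic you are invoking is the content of Theorem~\ref{the2}, which \emph{uses} Theorem~\ref{the3} as the model input for the sector piece. Running the comparison argument inside the sector computation is circular unless you have an independent derivation of the vertex contribution --- which is exactly the step you have not supplied.
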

We recognise the various terms in the right-hand side as follows. The first term is the area of the circular sector with opening angle $\alpha$ and radius $R$. The second term combines the contributions from an open edge of length $R$, and a Dirichlet edge of length $R$. The latter having an extra factor $2$. The third term is the angle contribution. The fourth term represents the contribution from two cusps. See Section~\ref{sec3} for details.

Unlike the integral for $c(\gamma)$ in \eqref{e15}, it is possible to evaluate the expression for $a(\alpha)$ in \eqref{e19}. To do so we write
\begin{equation*}
\frac{1}{\big(\sinh(\pi\theta/2\big))^2\cosh(\pi\theta)}=\frac{1}{(\sinh(\pi\theta/2))^2}-\frac{2}{\cosh(\pi\theta)},
\end{equation*}
and compute the resulting four integrals using formulae 3.511.7 and 3.511.9 in \cite{GR}. The common range of convergence for these four integrals is
$\pi<\alpha<3\pi/2$. We find
\begin{equation*}
a(\alpha)=-\frac38+\frac{3}{4\pi}-\frac{1}{8\cos \alpha}+\frac{1}{2\cos(\alpha/2)}+\bigg(\frac{7}{4}-\frac{3\alpha}{4\pi}\bigg)\frac{1}{\tan \alpha}+\bigg(\frac{1}{4}-\frac{\alpha}{4\pi}\bigg)\tan\alpha,\, \pi<\alpha<3\pi/2.
\end{equation*}
Outside this interval we can use \eqref{e19} to evaluate $a(\alpha)$. For example, we have
\begin{equation*}
a(\pi/2)=-\frac38+\frac1\pi+\frac12\sqrt 2,
\end{equation*}
\begin{equation}\label{e24}
a(\pi)=-\frac14,
\end{equation}
\begin{equation*}
a(3\pi/2)=-\frac38+\frac1\pi-\frac12\sqrt 2.
\end{equation*}

The value $a(\pi)=-\frac14$ in \eqref{e24} is of particular interest. Consider an open, bounded set $D$ in $\R^m$ with $C^{\infty}$ boundary $\partial D$.
Let $\partial D_{-}$ be a closed subset of $\partial D$ with $C^{\infty}$ boundary $\Sigma$, and $\dim \Sigma=m-2$.
Let $u_{D,\partial D_{-}}$ be the solution of \eqref{e5}, \eqref{e5a}, and \eqref{e5b}.
Then, provided an asymptotic series in half powers of $t$ exists, we have
\begin{align}\label{e30}
G_{D,\partial D_{-}}(t)=&|D|-\pi^{-1/2}\bigg(2\int_{\partial D_{-}}d\sigma+\int_{\partial D -\partial D_{-}}d\sigma\bigg)t^{1/2}\nonumber \\ &+\bigg(\frac12\int_{\partial D_{-}}d\sigma L_{aa}(\sigma)-\frac14 \textup{vol}(\Sigma)\bigg)t+O(t^{3/2}),\, t\downarrow 0,
\end{align}
where $d\sigma$ denotes the surface measure on $\partial D$, $L_{aa}$ is the trace of the second fundamental form
defined by the inward unit normal vector field of $\partial D$ in $D$,
$\textup{vol}(\Sigma)$ is the $(m-2)$-dimensional volume of the boundary of $\partial D_{-}$ in $\partial D$, and
$a(\pi)$ is its coefficient. To see that \eqref{e30} holds, we note that the local geometry around $\Sigma$ is as follows.
Let $P$ be a point of $\Sigma$. Then straightening out the boundary of $\partial D$ around $P$ we obtain, locally, an
$(m-1)$-dimensional hyper plane. The straightening out of $\Sigma$ around $P$ partitions this hyper plane into two
hyper half-planes at angle $\pi$. On one (closed) hyper half-plane we have a Dirichlet $0$ boundary condition,
and on the remaining open hyper half-plane we do not have boundary conditions.
This is precisely the geometry of a Dirichlet-open wedge with angle $\pi$ times $\Sigma$.
This then leads to the $a(\pi)\textup{vol}(\Sigma) t$ contribution in \eqref{e30}. The computation of the
coefficient of $t^{3/2}$ promises to be more complicated even in this special setting.
One expects that there is an integral over $\Sigma$ involving both the second fundamental form of $\Sigma$ in
$D$ and the second fundamental form of $\partial D$ in $D$. Consequently,
several special case calculations would be required. See also \cite{MvdBGKK}.

The proofs of Theorems \ref{the3}
and \ref{the2} have been deferred to Sections \ref{sec2}, and \ref{sec3} respectively. In Section \ref{sec4} we state some technical preliminaries which will be used in the proof of Theorem \ref{the3}.

\section{Proof of Theorem \ref{the2}}\label{sec3}
In this section, we make use of Theorem \ref{the3}. We prove that the latter theorem holds in Section \ref{sec2}.

Kac's principle of not feeling the boundary asserts that the solution of the heat equation with initial datum ${\bf 1}_D$, where $D$ is an open set in $\R^m$, is equal to $1$ on the interior of $D$ up to an exponentially small remainder, as $t\downarrow 0$. Kac formulated his principle in the case where a Dirichlet $0$ boundary condition is imposed on all of $\partial D$, that is $\partial D_{+}=\emptyset$. It has been shown that it also holds if no boundary condition is
imposed on $\partial D$, that is $\partial D_{-}=\emptyset$. See, for example, Proposition 9(i) in \cite{MvdB1}. In the same spirit, we have the following lemma.
\begin{lemma}\label{lem1}
If $D$ is an open set in $\R^2$, and if $\partial D_{-}$ is a closed subset of $\partial D$, then
\begin{equation}\label{e68}
1\ge \int_D dy\,  p_{\R^2-\partial D_{-}}(x,y;t)\ge \int_D dy\,  p_{D}(x,y;t)\ge 1- 2e^{-d(x,\partial D)^2/(4t)}.
\end{equation}
\end{lemma}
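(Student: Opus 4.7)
The plan is to verify the three inequalities in \eqref{e68} separately; throughout I assume $x\in D$, since otherwise the middle and right expressions need not be meaningful.

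The leftmost inequality is immediate from the Gaussian domination $p_{\R^2-\partial D_{-}}(x,y;t)\le p_{\R^2}(x,y;t)=(4\pi t)^{-1}e^{-|x-y|^2/(4t)}$, together with $\int_{\R^2}p_{\R^2}(x,y;t)\,dy=1$.

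For the middle inequality I would use the probabilistic representation. Combining \eqref{e7} with the analogous identity for $p_D$ yields $\int_D p_{\R^2-\partial D_{-}}(x,y;t)\,dy=\mathbb{P}_x(\tau_{\partial D_{-}}>t,\,B(t)\in D)$ and $\int_D p_D(x,y;t)\,dy=\mathbb{P}_x(\tau_D>t)$, where $\tau_D=\inf\{s\ge 0:B(s)\notin D\}$. Since $\partial D_{-}\subset\partial D$ and $D$ is open, on $\{\tau_D>t\}$ the path stays in $D$ throughout $[0,t]$, so $\{\tau_D>t\}\subset\{\tau_{\partial D_{-}}>t,\,B(t)\in D\}$, from which the inequality follows.

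For the rightmost inequality, set $r=d(x,\partial D)>0$. Then $B(x,r)\subset D$, so $\tau_D\ge\sigma:=\inf\{s\ge 0:|B(s)-x|\ge r\}$, and therefore
\begin{equation*}
1-\int_D p_D(x,y;t)\,dy=\mathbb{P}_x(\tau_D\le t)\le\mathbb{P}_x(\sigma\le t).
\end{equation*}
The key step is the two-dimensional reflection-type estimate $\mathbb{P}_x(\sigma\le t)\le 2\mathbb{P}_x(|B(t)-x|\ge r)$. Conditioning on $\sigma=s$ and $B(\sigma)=z$ with $|z-x|=r$, the strong Markov property makes $B(t)-z$ an independent two-dimensional Brownian increment of duration $t-s$. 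The tangent halfplane $H_z=\{y:(y-x)\cdot(z-x)\ge r^2\}$ is contained in $\{y:|y-x|\ge r\}$ by the Cauchy--Schwarz inequality, and reflection symmetry of the centred Gaussian $B(t)-z$ across the line $(y-x)\cdot(z-x)=r^2$ gives $\mathbb{P}(B(t)\in H_z\mid\sigma=s,\,B(\sigma)=z)=\tfrac12$. Integrating yields $\mathbb{P}_x(|B(t)-x|\ge r)\ge\tfrac12\mathbb{P}_x(\sigma\le t)$, while a polar-coordinate computation of the two-dimensional Gaussian tail gives $\mathbb{P}_x(|B(t)-x|\ge r)=e^{-r^2/(4t)}$. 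The main obstacle is this halving step, in which the tangent-halfplane symmetry is essential; the remaining steps are routine bookkeeping.
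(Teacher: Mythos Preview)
Your proof is correct. The upper bound and the middle inequality are handled essentially as in the paper: the paper phrases the middle inequality via the pointwise domain monotonicity $p_{\R^2-\partial D_{-}}\ge p_D$ and integrates, whereas you use the equivalent probabilistic inclusion $\{\tau_D>t\}\subset\{\tau_{\partial D_{-}}>t,\,B(t)\in D\}$.

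The genuine difference is in the rightmost inequality. The paper does not argue it out; it simply cites Lemma~4 of \cite{MvdBSS2} (taking $m=2$ in the first line of (3.2) there). You instead give a self-contained proof: compare $\tau_D$ with the exit time $\sigma$ from the ball $B(x,r)$, apply the strong Markov property at $\sigma$, use the tangent-halfplane reflection to obtain $\mathbb{P}_x(\sigma\le t)\le 2\,\mathbb{P}_x(|B(t)-x|\ge r)$, and then evaluate the two-dimensional Gaussian tail exactly as $e^{-r^2/(4t)}$. This recovers precisely the constant $2$ in \eqref{e68}. The advantage of your route is that it is fully transparent and does not depend on an external reference; the advantage of citing \cite{MvdBSS2} is brevity and that the result there is stated for general dimension $m$. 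Both lead to the same bound in the planar case.
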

\begin{proof}
Since the Dirichlet heat kernel is monotone in the domain, and since $D\subset \R^m-\partial D_{-}$,
\begin{equation*}
p_{\R^m-\partial D_{-}}(x,y;t)\ge p_{D}(x,y;t)>0,\, x\in D,\, y\in D,\, t>0.
\end{equation*}
Hence $u_{D, \partial D_{-}}(x;t) \ge \int_Ddy\,p_D(x,y;t)$. The latter integral has been bounded from below in Lemma 4 of \cite{MvdBSS2}. Taking $m=2$ in the first line of (3.2) in that paper we find \eqref{e68}.
The upper bound in \eqref{e68} follows as $p_{\R^m-\partial D_{-}}(x,y;t) \le p_{\R^m}(x,y;t)$
and $\int_{\R^m} dy \, p_{\R^m}(x,y;t) =1$.
\end{proof}

As in \cite{MvdBSS1, MvdBSS2, MvdBKG2}, the strategy of the proof of Theorem \ref{the2} is to partition $D$ into sets on which $u_{D,\partial D_{-}}(x;t)$ is approximated either by $1$, or by $u_{W_{\alpha}}(x;t)$, or by $u_{H}(x;t)$ (where $H \subset \R^m$ is a half-space) depending on where $x \in D$ lies with respect to the partition. By Lemma \ref{lem1}, the terms which compensate for these approximations are exponentially small.

Below we describe the partition of the set $D$. At each vertex of $\partial D$ with angle $\theta$, we consider the circular sector of radius $R>0$ and angle $\theta$ that is contained in $D$.
For $\delta >0$ (to be specified later), we consider the set of points in $D$ that are at distance less than $\delta$ from $\partial D$ and that are not contained in the union of the circular sectors (see Figure \ref{fig3}).

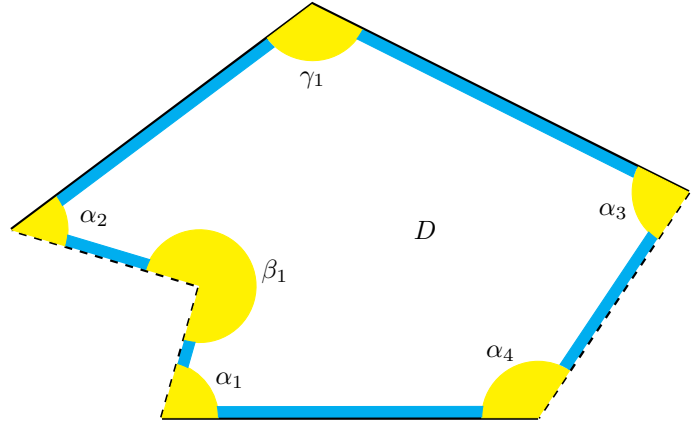
\begin{figure}[!ht]
\begin{center}
\begin{tikzpicture}
\draw[dashed, ultra thick] (0,0) -- (0.5,1.75) -- (-2,2.5);
\draw[ultra thick] (-2,2.5) -- (2,5.5) -- (7,3);
\draw[dashed, ultra thick] (7,3) -- (5,0);
\draw[ultra thick] (5,0) -- (0,0);
\draw[fill=yellow,yellow] (0,0) -- (0.75,0) arc(0:74.0546:0.75) -- cycle;
\draw[fill=yellow,yellow] (5,0) --  (4.25,0) arc(180:56.31:0.75) -- cycle;
\draw[fill=yellow,yellow] (7,3) -- (6.58397,2.37596) arc(236.3099:153.4349:0.75) -- cycle;
\draw[fill=yellow,yellow] (2,5.5) -- (1.4,5.05) arc(-143.1301:-26.5651:0.75);
\draw[fill=yellow,yellow] (0.5,1.75) -- (0.29396,1.02886) arc(-105.9454:163.30076:0.75);
\draw[fill=yellow,yellow] (-2,2.5) -- (-1.2816,2.2845) arc(-16.69924:36.86897:0.75);
\draw[fill=cyan,cyan] (0.75,0) arc(0:11.3099:0.75) -- (4.2652,0.15)
arc(168.6901:180:0.75) (4.25,0)--cycle;
\draw[fill=cyan, cyan] (6.584,2.376) arc(-123.6901:-135:0.75) --(5.2912,0.7072)
arc(67.6199:56.31:0.75) (5.416,0.624) -- cycle;
\draw[fill=cyan, cyan] (2.6708,5.1646) arc(-26.5651:-37.875:0.75) --(6.2621,3.2012)
arc(164.7448:153.4349:0.75) (6.3292,3.3354) --cycle;
\draw[fill=cyan, cyan] (-1.4,2.95) arc(36.8699:25.56:0.75) --(1.49,4.93)
arc(-131.8202:-143.1301:0.75) (1.4,5.05) -- cycle;
\draw[fill=cyan, cyan] (-1.2816,2.2845) arc(-16.69924:-5.38931:0.75) --(-0.16215,2.10221)
arc(151.9908:163.30076:0.75) (-0.21834,1.96551) -- cycle;
\draw[fill=cyan, cyan] (0.29396,1.02886) arc(-105.945395:-94.635463:0.75)  --(0.34347,0.66673)
 arc(62.74467:74.0546:0.75) (0.20604,0.72114) -- cycle;
\node at (3.5,2.5) {$D$};
\node at (0.9,0.5) {$\alpha_1$};
\node at (1.5,1.95) {$\beta_1$};
\node at (-0.9,2.65) {$\alpha_2$};
\node at (2,4.5) {$\gamma_1$};
\node at (6,2.75) {$\alpha_3$};
\node at (4.5,0.85) {$\alpha_4$};
\end{tikzpicture}
\caption{Partition of $D$ (the Dirichlet, respectively open, edges are displayed as solid, respectively dashed, lines).}
\label{fig3}
\end{center}
\end{figure}

Let $H=\{(x_1,x_2)\in \R^2: x_2>0\}$. Up to changing the coordinates (if necessary), we can suppose that $\partial D \cap \partial H$ is an edge $e$ of length $\ell$. Let $L = \ell - 2R$. In this way, each blue region in Figure \ref{fig3} can be written (up to a set of measure $0$) as the union of a rectangle
$$ \{(x_1,x_2) \in \R^2 : R < x_1 < R+L, 0<x_2 <\delta\},$$
and two cusps of the form
\begin{equation*}
E_1(\delta,R)=\{x\in \R^2: 0<x_1 <R,\,|x|>R,\, 0<x_2<\delta\},
\end{equation*}
and
\begin{equation*}
E_2(\delta,R)=\{x\in \R^2: \ell-R<x_1 <\ell,\, |x - (\ell,0)|>R,\,0<x_2<\delta\}.
\end{equation*}
We say that these cusps are adjacent to $\partial H$.

We observe that each sector has two neighbouring cusps. In the partition of $D$, cusps of two types feature. That is, those cusps adjacent to $\partial H$ with a Dirichlet $0$ boundary condition on $e$, and those cusps adjacent to $\partial H$ without a boundary condition on $e$ (see Figure \ref{fig4}). Cusps of the latter type feature in \cite{MvdBKG2}, and those of the former type feature in \cite{MvdBSS2}.

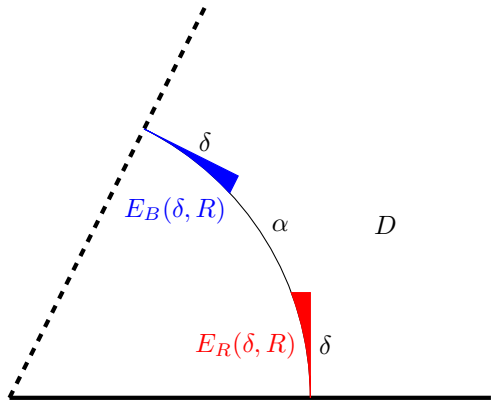
\begin{figure}[!ht]
 \begin{center}
 \begin{tikzpicture}[scale=2]
\draw[dashed, ultra thick] (0,0) -- (1.3,2.6);
\draw[ultra thick] (0,0) -- (3.2,0);
\draw (2,0) arc(0:63.43495:2);
\draw[fill=red, red] (2,0) arc(0:20.48732:2) -- (2,0.7)--cycle;
\draw[fill=blue, blue] (0.89443,1.78885) arc(63.43495:42.94762882:2) -- (1.52053,1.475805)--cycle;
\node at (2.1,0.35) {$\delta$};
\node at (1.3,1.7) {$\delta$};
\node at (1.8,1.15) {$\alpha$};
\node at (2.5,1.15) {$D$};
\node[red] at (1.57,0.35) {$E_{R}(\delta,R)$};
\node[blue] at (1.1,1.25) {$E_{B}(\delta,R)$};
\end{tikzpicture}
 \caption{ A sector contained in a Dirichlet-open wedge with angle $\alpha$, and its neighbouring cusps $E_{R}(\delta,R)$ adjacent to an edge with a Dirichlet 0 boundary condition, and $E_{B}(\delta,R)$ adjacent to an open edge (the Dirichlet, respectively open, edge is displayed as a solid, respectively dashed, line).}
 \label{fig4}
  \end{center}
 \end{figure}

We first consider the case of a cusp which is adjacent to $\partial H$ with a Dirichlet 0 boundary condition.
\begin{lemma}\label{lem2}
If $\delta<R$ then
\begin{equation*}
\int_{E(\delta,R)}dx\, u_H(x;t)=|E(\delta,R)|-\frac{2Rt^{1/2}}{\pi^{1/2}}\int_1^{\infty}\frac{dw}{w^2}\int_0^1\frac{vdv}{(1-v^2)^{1/2}}e^{-R^2v^2w^2/(4t)}
+O(t^{1/2}e^{-\delta^2/(4t)}).
\end{equation*}
\end{lemma}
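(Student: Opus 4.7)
My plan is to compute $u_H$ explicitly via the reflection principle, reduce to a one-variable integral over $x_2$, and then massage that integral into the claimed form. Since $H=\{(x_1,x_2):x_2>0\}$ with Dirichlet condition on $\partial H$, the standard computation gives $u_H((x_1,x_2);t)=\mathrm{erf}(x_2/(2\sqrt t))$, so in particular $1-u_H$ depends only on $x_2$. Because $\delta<R$, the cusp can be rewritten as
\begin{equation*}
E(\delta,R)=\{(x_1,x_2):0<x_2<\delta,\ \sqrt{R^2-x_2^2}<x_1<R\},
\end{equation*}
whose horizontal cross-section at height $x_2$ has length $R-\sqrt{R^2-x_2^2}$. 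Fubini then yields
\begin{equation*}
\int_{E(\delta,R)}u_H(x;t)\,dx=|E(\delta,R)|-\int_0^\delta (R-\sqrt{R^2-x_2^2})\,\mathrm{erfc}(x_2/(2\sqrt t))\,dx_2.
\end{equation*}

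Next, I would extend the upper limit of the remaining integral from $\delta$ to $R$. Using the elementary bound $\mathrm{erfc}(z)\le e^{-z^2}$ together with $R-\sqrt{R^2-x_2^2}\le R$, the tail $\int_\delta^R$ is bounded by $R\int_\delta^\infty e^{-x_2^2/(4t)}\,dx_2\le R\sqrt{\pi t}\,e^{-\delta^2/(4t)}$, which is $O(t^{1/2}e^{-\delta^2/(4t)})$. It remains to rewrite $I:=\int_0^R(R-\sqrt{R^2-y^2})\,\mathrm{erfc}(y/(2\sqrt t))\,dy$ in the stated form. The key rewrite is
\begin{equation*}
R-\sqrt{R^2-y^2}=\int_0^y\frac{s\,ds}{\sqrt{R^2-s^2}},
\end{equation*}
which after swapping order gives
\begin{equation*}
I=\int_0^R\frac{s\,ds}{\sqrt{R^2-s^2}}\int_s^R \mathrm{erfc}(y/(2\sqrt t))\,dy.
\end{equation*}
Using the substitution $\sigma=yw/(2\sqrt t)$ in $\mathrm{erfc}(y/(2\sqrt t))=\frac{2}{\sqrt\pi}\int_{y/(2\sqrt t)}^\infty e^{-\sigma^2}d\sigma$ one obtains the integral representation $\mathrm{erfc}(y/(2\sqrt t))=\frac{y}{\sqrt{\pi t}}\int_1^\infty e^{-y^2w^2/(4t)}\,dw$. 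Plugging this in, swapping the order of the $w$- and $y$-integrals, and using the elementary antiderivative of $y\,e^{-y^2w^2/(4t)}$, the inner integral becomes
\begin{equation*}
\int_s^R \mathrm{erfc}(y/(2\sqrt t))\,dy=\frac{2\sqrt t}{\sqrt\pi}\int_1^\infty\frac{dw}{w^2}\bigl(e^{-s^2w^2/(4t)}-e^{-R^2w^2/(4t)}\bigr).
\end{equation*}
The contribution of the $e^{-R^2w^2/(4t)}$ piece, integrated against $\int_0^R s\,ds/\sqrt{R^2-s^2}=R$, is $O(\sqrt t\,e^{-R^2/(4t)})$, which is absorbed in the error since $R>\delta$. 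The remaining term, after the substitution $s=Rv$ (under which $s\,ds/\sqrt{R^2-s^2}=Rv\,dv/\sqrt{1-v^2}$), produces exactly the stated principal term.

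The technical checks are routine: the $\mathrm{erfc}$ bound for the tail, the antiderivative of $y\,e^{-y^2w^2/(4t)}$, and the applicability of Fubini (all integrands are positive). There is no deep obstacle; the only non-mechanical step is choosing the representation $R-\sqrt{R^2-y^2}=\int_0^y s\,ds/\sqrt{R^2-s^2}$, which is precisely what generates the weight $v\,dv/\sqrt{1-v^2}$ and $w^{-2}$ in the claimed form.
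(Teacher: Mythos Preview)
Your proof is correct and follows essentially the same route as the paper's. Both start from $u_H(x;t)=\mathrm{erf}(x_2/(2\sqrt t))$, reduce to a one-variable integral over the horizontal slices of the cusp, introduce the representation $\mathrm{erfc}(y/(2\sqrt t))=\frac{y}{\sqrt{\pi t}}\int_1^\infty e^{-y^2w^2/(4t)}\,dw$, and end with the substitution $s=Rv$; the only cosmetic difference is that the paper integrates by parts in $x_2$ (differentiating $R-\sqrt{R^2-x_2^2}$) where you instead write $R-\sqrt{R^2-y^2}=\int_0^y s\,ds/\sqrt{R^2-s^2}$ and apply Fubini, and the paper extends the range $[0,\delta]\to[0,R]$ at the end rather than at the start.
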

\begin{proof} See also (4.7) in \cite{MvdBSS2}.
We have that
\begin{equation*}
u_H(x;t)=\frac{1}{(\pi t)^{1/2}}\int_0^{x_2}dq\,e^{-q^2/(4t)}.
\end{equation*}
Since the length of the line segment in $E(\delta,R)$ parallel to the $x_1$ axis equals $R-(R^2-x_2^2)^{1/2}$, we have
\begin{align}\label{e72}
\int_{E(\delta,R)}dx\,u_H(x;t)&=\frac{1}{(\pi t)^{1/2}}\int_0^{\delta}dx_2 ( R-(R^2-x_2^2)^{1/2}) \int_0^{x_2}dq\,e^{-q^2/(4t)}\nonumber \\&
=|E(\delta,R)|-\frac{1}{(\pi t)^{1/2}}\int_0^{\delta}dx_2 \big(R-(R^2-x_2^2)^{1/2}\big)\int_{x_2}^{\infty}dq\,e^{-q^2/(4t)}\nonumber \\ &
=|E(\delta,R)|-\frac{1}{(\pi t)^{1/2}}\int_0^{\delta}dx_2 \big(R-(R^2-x_2^2)^{1/2}\big)x_2\int_{1}^{\infty}dw\,e^{-w^2x_2^2/(4t)}\nonumber \\ &
=|E(\delta,R)|-\frac{2t^{1/2}}{\pi ^{1/2}}\int_{1}^{\infty}\frac{dw}{w^2}\int_0^{\delta}\frac{x_2dx_2}{\big(R^2-x_2^2\big)^{1/2}}e^{-w^2x_2^2/(4t)}\nonumber \\ &\hspace{18mm}+\frac{2t^{1/2}}{\pi ^{1/2}}\big(R-(R^2-\delta^2)^{1/2}\big)\int_{1}^{\infty}\frac{dw}{w^2}e^{-w^2\delta^2/(4t)}\nonumber \\ &
=|E(\delta,R)|-\frac{2Rt^{1/2}}{\pi^{1/2}}\int_1^{\infty}\frac{dw}{w^2}\int_0^1\frac{vdv}{(1-v^2)^{1/2}}e^{-R^2v^2w^2/(4t)}\nonumber \\ &
\hspace{18mm}+\frac{2t^{1/2}}{\pi ^{1/2}}\big(R-(R^2-\delta^2)^{1/2}\big)\int_{1}^{\infty}\frac{dw}{w^2}e^{-w^2\delta^2/(4t)}\nonumber \\ &\hspace{18mm}
+\frac{2t^{1/2}}{\pi ^{1/2}}\int_{1}^{\infty}\frac{dw}{w^2}\int_{\delta}^R\frac{x_2dx_2}{\big(R^2-x_2^2\big)^{1/2}}e^{-w^2x_2^2/(4t)}.
\end{align}
Both the third and fourth terms in the right-hand side of \eqref{e72} are $O(t^{1/2}e^{-\delta^2/(4t)})$.
\end{proof}

Next we consider the case of a cusp which is adjacent to $\partial H$ which is open (that is without a boundary condition).
\begin{lemma}\label{lem3}
If $\delta<R$, then
\begin{equation}\label{e73}
\int_{E(\delta,R)}dx\, u_{H,\emptyset}(x;t)=|E(\delta,R)|-\frac{Rt^{1/2}}{\pi^{1/2}}\int_1^{\infty}\frac{dw}{w^2}\int_0^1\frac{vdv}{(1-v^2)^{1/2}}
e^{-R^2v^2w^2/(4t)}+O(t^{1/2}e^{-\delta^2/(4t)}).
\end{equation}
\end{lemma}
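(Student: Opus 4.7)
The plan is to reduce Lemma \ref{lem3} directly to Lemma \ref{lem2} via the classical reflection principle, rather than redoing the computation from scratch. Since no boundary condition is imposed on $\partial H$, the function $u_{H,\emptyset}(x;t)$ is simply the convolution of ${\bf 1}_H$ with the free heat kernel on $\R^2$. On the other hand, the Dirichlet heat kernel on $H$ satisfies $p_H(x,y;t)=p_{\R^2}(x,y;t)-p_{\R^2}(x,y^*;t)$, where $y^*$ denotes the reflection of $y$ across $\partial H$. Integrating this identity over $y\in H$ and using $\int_H dy\, p_{\R^2}(x,y;t)+\int_{\R^2-H}dy\, p_{\R^2}(x,y;t)=1$ yields the key identity
\begin{equation*}
u_{H,\emptyset}(x;t)=\tfrac12+\tfrac12\, u_H(x;t), \qquad x\in H,\ t>0.
\end{equation*}

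With this relation in hand, the second step is just arithmetic. Integrating over the cusp $E(\delta,R)$ gives
\begin{equation*}
|E(\delta,R)|-\int_{E(\delta,R)}dx\, u_{H,\emptyset}(x;t)=\tfrac12\Bigl(|E(\delta,R)|-\int_{E(\delta,R)}dx\, u_H(x;t)\Bigr),
\end{equation*}
and then Lemma \ref{lem2} substitutes the right-hand side, producing exactly the claimed asymptotic with the prefactor $2R/\pi^{1/2}$ replaced by $R/\pi^{1/2}$. The exponentially small remainder $O(t^{1/2}e^{-\delta^2/(4t)})$ is inherited verbatim.

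There is really no substantive obstacle here: the only point to be careful about is that the reflection identity for $u_{H,\emptyset}$ is a statement on the whole plane and uses that the free heat semigroup preserves total mass, so one needs $x\in H$ (which holds throughout $E(\delta,R)$ since $E(\delta,R)\subset H$). Everything else — Tonelli to interchange integrals, the change of variables $x_2=Rv$ and $w\in(1,\infty)$ that already appears in the proof of Lemma \ref{lem2}, and the exponential smallness of the tail for $x_2\in(\delta,R)$ — is imported without change. As an alternative route, one could redo the explicit calculation in Lemma \ref{lem2} with $u_H$ replaced by $u_{H,\emptyset}(x;t)=\frac{1}{2(\pi t)^{1/2}}\int_{-\infty}^{x_2}dq\, e^{-q^2/(4t)}$; splitting the inner integral into its $\int_{-\infty}^0$ (which contributes $\frac12|E(\delta,R)|$) and $\int_0^{x_2}$ pieces leads to the same conclusion with a factor $\frac12$ in front of the small-$t$ correction, consistent with halving the coefficient from $2R/\pi^{1/2}$ to $R/\pi^{1/2}$.
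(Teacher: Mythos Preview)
Your proof is correct and takes essentially the same approach as the paper. Both arguments hinge on the identity $1-u_{H,\emptyset}(x;t)=\tfrac12\bigl(1-u_H(x;t)\bigr)$ (equivalently, $u_{H,\emptyset}=\tfrac12+\tfrac12 u_H$) and then invoke Lemma~\ref{lem2}; the only cosmetic difference is that the paper obtains this identity by writing out the two explicit Gaussian-integral formulas for $u_H$ and $u_{H,\emptyset}$ and comparing coefficients, whereas you deduce it from the reflection principle for $p_H$ and conservation of mass.
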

\begin{proof}
We recall that for $D \subset \R^m$ open,
\begin{equation*}
  u_{D,\emptyset}(x;t) = \int_{D} dy \, (4\pi t)^{-m/2}e^{-|x-y|^2/(4t)},
\end{equation*}
(see \cite{MvdB1} for example). Hence, for $D=H$, we have
\begin{equation}\label{e74}
u_{H,\emptyset}(x;t)=1-\frac{1}{(4\pi t)^{1/2}}\int_{x_2}^{\infty}dq\,e^{-q^2/(4t)}.
\end{equation}
Comparing \eqref{e74} with
\begin{equation*}
u_{H}(x;t)=1-\frac{1}{(\pi t)^{1/2}}\int_{x_2}^{\infty}dq\,e^{-q^2/(4t)},
\end{equation*}
we see that the second, third and fourth terms in the right-hand side of \eqref{e72} are weighted with a factor $\frac12$ in the computation of the integral in the left-hand side of \eqref{e73}. This then gives \eqref{e73}.
\end{proof}
\begin{lemma}\label{lem4}
If $S=\{(x_1,x_2)\in\R^2:0<x_1<L,\, 0<x_2<\delta\}$, then
\begin{equation}\label{e76}
\int_S dx\,u_H(x;t)=|S|-\frac{2Lt^{1/2}}{\pi^{1/2}}+O(t^{1/2}e^{-\delta^2/(4t)}),
\end{equation}
and
\begin{equation}\label{e77}
\int_S dx\,u_{H,\emptyset}(x;t)=|S|-\frac{Lt^{1/2}}{\pi^{1/2}}+O(t^{1/2}e^{-\delta^2/(4t)}).
\end{equation}
\end{lemma}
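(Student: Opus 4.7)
My plan is to imitate the computation in Lemma \ref{lem2} but simplified by the absence of the cusp geometry: $S$ is just a rectangle, so the $x_1$ integration is trivial and produces a factor $L$. For \eqref{e76}, I would begin with the explicit representation
\begin{equation*}
u_H((x_1,x_2);t)=\frac{1}{(\pi t)^{1/2}}\int_0^{x_2}dq\,e^{-q^2/(4t)},
\end{equation*}
integrate in $x_1$ over $(0,L)$ to pull out $L$, and then split
\begin{equation*}
\int_0^{x_2}dq\,e^{-q^2/(4t)}=\int_0^\infty dq\,e^{-q^2/(4t)}-\int_{x_2}^\infty dq\,e^{-q^2/(4t)}=(\pi t)^{1/2}-\int_{x_2}^\infty dq\,e^{-q^2/(4t)}.
\end{equation*}
The first piece contributes $L\delta=|S|$ exactly, after dividing by $(\pi t)^{1/2}$.

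For the remainder, I would rewrite $\int_{x_2}^\infty e^{-q^2/(4t)}dq = x_2\int_1^\infty e^{-w^2 x_2^2/(4t)}dw$ via $q=x_2w$, swap order of integration (Tonelli), and use the elementary identity
\begin{equation*}
\int_0^\delta x_2\,e^{-w^2x_2^2/(4t)}dx_2=\frac{2t}{w^2}\bigl(1-e^{-w^2\delta^2/(4t)}\bigr).
\end{equation*}
Combined with $\int_1^\infty dw/w^2=1$, this produces the main term $-\frac{2Lt^{1/2}}{\pi^{1/2}}$ and a residual $\frac{2Lt^{1/2}}{\pi^{1/2}}\int_1^\infty w^{-2}e^{-w^2\delta^2/(4t)}dw$, which is bounded by $\frac{2Lt^{1/2}}{\pi^{1/2}}e^{-\delta^2/(4t)}$ since $e^{-w^2\delta^2/(4t)}\le e^{-\delta^2/(4t)}$ for $w\ge 1$. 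This gives \eqref{e76}.

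For \eqref{e77}, the key observation is that the formulas displayed in the proof of Lemma \ref{lem3} give
\begin{equation*}
u_{H,\emptyset}(x;t)=\tfrac{1}{2}\bigl(1+u_H(x;t)\bigr),
\end{equation*}
because $u_H=1-\frac{1}{(\pi t)^{1/2}}\int_{x_2}^\infty e^{-q^2/(4t)}dq$ and $u_{H,\emptyset}=1-\frac{1}{2(\pi t)^{1/2}}\int_{x_2}^\infty e^{-q^2/(4t)}dq$. Integrating this identity over $S$ yields
\begin{equation*}
\int_S u_{H,\emptyset}(x;t)\,dx=\tfrac{1}{2}|S|+\tfrac{1}{2}\int_S u_H(x;t)\,dx,
\end{equation*}
and plugging \eqref{e76} into the right-hand side halves both the $|S|$ defect and the $L t^{1/2}/\pi^{1/2}$ coefficient, producing \eqref{e77} with the same error estimate.

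There is no substantial obstacle here; the content of the lemma is bookkeeping of the boundary-layer contribution of a straight edge. The only care needed is the Fubini step and the routine bound on the tail $\int_1^\infty w^{-2}e^{-w^2\delta^2/(4t)}dw$ that certifies the $O(t^{1/2}e^{-\delta^2/(4t)})$ error.
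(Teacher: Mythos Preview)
Your proof is correct and follows essentially the same approach as the paper: both integrate the explicit half-plane solution over the rectangle, pull out the factor $L$, and isolate the tail $\int_{x_2}^\infty e^{-q^2/(4t)}\,dq$; the paper bounds the remainder by extending the $x_2$-integral to $[0,\infty)$ and controlling $\int_\delta^\infty$, whereas you use the substitution $q=x_2w$ from Lemma~\ref{lem2}, but the two computations are equivalent and equally elementary. For \eqref{e77}, your identity $u_{H,\emptyset}=\tfrac12(1+u_H)$ is exactly the ``factor $\tfrac12$'' observation the paper invokes from the end of the proof of Lemma~\ref{lem3}.
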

\begin{proof}
We have
\begin{align*}
\int_S dx\,u_H(x;t)&=\int_0^L dx_1\,\int_0^{\delta}dx_2\,\big(1-\frac{1}{(\pi t)^{1/2}}\int_{x_2}^{\infty}dq\,e^{-q^2/(4t)}\big)\nonumber \\ &
=|S|-\frac{2Lt^{1/2}}{\pi^{1/2}}+\int_{\delta}^{\infty}dx_2\,\frac{L}{(\pi t)^{1/2}}\int_{x_2}^{\infty}dq\,e^{-q^2/(4t)}\nonumber \\ &
=|S|-\frac{2Lt^{1/2}}{\pi^{1/2}}+O(t^{1/2}e^{-\delta^2/(4t)}).
\end{align*}
This proves \eqref{e76}. The observation concluding the proof of Lemma \ref{lem3} immediately implies \eqref{e77}.
\end{proof}
{\it Proof of Theorem \ref{the2}.}
Similarly to the strategies of the proofs in \cite{MvdBSS1,MvdBSS2,MvdBKG2}, it remains to apply the
model computations in Lemmas \ref{lem2}, \ref{lem3} and \ref{lem4}, the sector computations from Theorem \ref{the3}, \cite{MvdBKG2} and \cite{MvdBSS2} to the sets which partition $D$, and then apply Lemma \ref{lem1} to the compensating terms.

We first choose $R$ and $\delta$ appropriately in the partition of $D$. Let $v$ be an arbitrary vertex of the polygonal boundary, and let $e_v$ denote the union of the two edges of $\partial D$ adjacent to $v$. We choose
\begin{equation*}
R=\frac12 \inf_{v\in \mathcal{V}}\inf\{d(v,y):y\in \partial D-e_v\}.
\end{equation*}
This choice of $R$ guarantees that all circular sectors are non-overlapping. Moreover, the distance from any point in a circular sector with vertex $v$, radius $R$, and angle $\theta$ to $W_{\theta} - D$ is at least $R$.
By Lemma \ref{lem1} we have that the model computations for the sectors with angles in $\mathcal{A},\mathcal{B},\mathcal{C}$ give the appropriate contributions to $G_{D,\partial D_{-}}(t)$ in \eqref{e18} up to an additive constant which is bounded in absolute value by $2|D|e^{-R^2/(4t)}$.

Next we choose $\delta$ sufficiently small to ensure that the cusps are pairwise disjoint.
We define $\varepsilon$ to be the smallest interior angle of the boundary $\partial D$:
\begin{equation*}
\varepsilon=\min\{\mathcal{A}\cup\mathcal{B}\cup\mathcal{C}\}.
\end{equation*}
It is straightforward to check that
\begin{equation*}
\delta=\frac{R}{2}\sin(\varepsilon/2)
\end{equation*}
satisfies the aforementioned condition.

The distance between the cusp and $H - D$ is larger than $\delta = \frac{R}{2}\sin(\varepsilon/2)$ (if we consider the cusp corresponding to the sector with angle $\varepsilon$).
By Lemma \ref{lem1}, we have that the model computations in Lemma \ref{lem2} and Lemma \ref{lem3} give the appropriate contributions to $G_{D,\partial D_{-}}(t)$ up to an additive constant which is bounded in absolute value by $2|D|e^{-R^2(\sin(\varepsilon/2))^2/(16t)}$. This is because the terms of order $t^{3/2}$ and higher in Theorem \ref{the3}, Lemma \ref{lem2} and Lemma \ref{lem3} cancel out up to an exponentially small remainder.

Next we consider the contribution of the subset of $D$ which is within distance $\delta$ of $\partial D$, and which is not contained in any of the radial sectors and their corresponding cusps. This subset is a collection of disjoint rectangles supported either by a Dirichlet or an open edge respectively. Each such rectangle has at least distance $\delta$ to any of the other edges. We conclude that, by Lemma \ref{lem4} and Lemma \ref{lem1}, they give the various contributions to $G_{D,\partial D_{-}}(t)$ up to an additive constant which is bounded in absolute value by $2|D|e^{-R^2(\sin(\varepsilon/2))^2/(16t)}$.

The remaining subset of $D$ which is not contained in a sector, cusp or rectangle has distance $\delta$ to the boundary, and so contributes its measure up to an additive constant which is bounded in absolute value by $2|D|e^{-R^2(\sin(\varepsilon/2))^2/(16t)}$, by Lemma \ref{lem1}.
All remainders above and in the proof of Theorem \ref{the3} are of the form $O(t^{\zeta} e^{-\eta/t}),\zeta\ge 0,\eta>0$. This gives the remainder in \eqref{e18}.
\hspace*{\fill }$\square $

\section{Technical preliminaries}\label{sec4}
It has been noted (see p.43 in \cite{MvdBSS2}) that there are three closed form expressions for the heat kernel of a wedge with opening angle $\gamma$, see \cite{CJ}, \cite{KM}, and \cite{SF}. The authors of \cite{MvdBSS2} were unable to extract the angle contribution $c(\gamma)t$ featuring in \eqref{e15} from these expressions. In the case at hand, there is a fourth explicit formula for the heat kernel of a wedge with opening angle $2\pi$ (see p.380 in \cite{CarslawJ}). However, we were unable to obtain a workable expression using that formula.

D. B. Ray managed to compute the angle contribution of the trace of the Dirichlet heat semigroup for a polygon using the Laplace transform of the heat kernel for a wedge, expressed as a Kontorovich Lebedev transform (see the footnote on p.44 of \cite{MKS}). This strategy has been successfully employed in both \cite{MvdBSS1} and \cite{MvdBSS2}. We also employ it in this article.

Let $W_{\alpha}$ be the open infinite wedge as in Theorem \ref{the3}, and let $p_{W_{\alpha}}(A_1,A_2;t)$ denote the Dirichlet heat kernel for $W_{\alpha}$.
Throughout we require $s>0,t>0$. Let
\begin{equation*}
\hat{p}_{W_{\alpha}}(A_1,A_2;s)=\int_{0}^{\infty}dt \, e^{-st}p_{W_{\alpha}}(A_1,A_2;t)
\end{equation*}
be the associated Green's function (that is, the Laplace transform of $p_{W_{\alpha}}(A_1,A_2;t)$), and let $A_i=(a_i,\alpha_i),\, i=1,2$ in polar coordinates.
Then, following the footnote on p.44 in \cite{MKS}, and Appendix A of \cite{NRS},
\begin{align}\label{e32a}
\hat{p}&_{W_{ \alpha}}(A_1,A_2;s)=\frac{1}{\pi^2}\int_0^{\infty}d\theta K_{i\theta}(\sqrt s a_1)K_{i\theta}(\sqrt s a_2)\nonumber \\ &\times\left(\cosh((\pi-|\alpha_1-\alpha_2|)\theta)-\frac{\sinh(\pi\theta)}{\sinh(\alpha \theta)}\cosh((\alpha-\alpha_1-\alpha_2)\theta)+\frac{\sinh((\pi-\alpha)\theta)}{\sinh(\alpha\theta)}\cosh((\alpha_1-\alpha_2)\theta)\right),
\end{align}
where $K_{i\theta}$ is the modified Bessel function, defined for example by
formula 3.547.4 of \cite{GR},
\begin{equation}\label{e42}
K_{i\theta}(\sqrt s a)=\int_0^{\infty} dw \cos(w\theta) e^{-\sqrt s a\cosh w}.
\end{equation}
In the special case $\alpha=2\pi$, \eqref{e32a} simplifies and we obtain
 \begin{align*}
\hat{p}_{W_{2\pi}}(A_1,A_2;s)&=\frac{1}{\pi^2}\int_0^{\infty}d\theta K_{i\theta}(\sqrt s a_1)K_{i\theta}(\sqrt s a_2)\nonumber \\ &\times\left(\cosh((\pi-|\alpha_1-\alpha_2|)\theta)-\frac{\sinh(\pi\theta)}{\sinh(2\pi \theta)}\left(\cosh((2\pi-\alpha_1-\alpha_2)\theta)+\cosh((\alpha_1-\alpha_2)\theta)\right)\right).
\end{align*}
In order to prove Theorem \ref{the3} in Section \ref{sec2} below, we compute
\begin{equation}\label{e32b}
\int_{0}^{R} da_1 \, a_1 \int_{0}^{\infty} da_2 \, a_2 \int_{0}^{\alpha} d\alpha_1 \int_{0}^{\alpha} d\alpha_2
\, \hat{p}_{W_{2\pi}}(A_1,A_2;s),
\end{equation}
and then take the inverse Laplace transform. Throughout this paper we denote by $L^{-1}$ the inverse Laplace transform. That is, if $\hat{f}(s)=\int_0^{\infty}dt\, e^{-st}f(t)$ then $L^{-1} \{\hat{f}\}
(t)=f(t)$, at points of continuity of $f$.

The lack of a suitable Tauberian theorem prevents us from deducing the behaviour as $t\downarrow 0$ of $\int_0^{R} dr\,r\,\int_0^{\alpha}d\phi \, u_{W_{\alpha}}((r,\phi);t)$ from the behaviour as $s\uparrow\infty$ of the expression under \eqref{e32b}. So after the computation of \eqref{e32b}, the resulting $s$-dependent terms have to be inverted to the $t$-domain, including those terms which turn out to be exponentially small in $t$. For the reader's convenience, we list some relevant formulae for the computation of \eqref{e32b} above.

Formulae 6.561.16, 8.332.3 in \cite{GR} yield
\begin{equation}\label{e34}
\int_0^{\infty}da\, a K_{i\theta}(\sqrt s a)=\frac{\pi\theta}{2s\sinh(\pi\theta/2)}.
\end{equation}
Moreover formulae 6.794.2, 6.795.1, 4.114.2, 4.116.2 in \cite{GR} read
\begin{equation}\label{e35}
\int_0^{\infty}d\theta\,\cosh(\pi\theta/2)  K_{i\theta}(\sqrt s a)=\frac{\pi}{2},\, a>0,
\end{equation}
\begin{equation}\label{e35a}
\int_0^{\infty}d\theta\, \cos(b\theta)K_{i\theta}(a)=\frac{\pi}{2}e^{-a\cosh b},\,a>0, |\Im b|\le \frac{\pi}{2},
\end{equation}
\begin{equation}\label{e35b}
\int_0^{\infty}d\theta\, \frac{\cos(a\theta)}{\theta}\frac{\sinh(\beta \theta)}{\cosh(\gamma \theta)}=\frac12 \log\bigg(\frac{\cosh(a\pi/(2\gamma))+\sin(\beta\pi/(2\gamma))}{\cosh(a\pi/(2\gamma))-\sin(\beta\pi/(2\gamma))}\bigg),|\Re \beta|<\Re \gamma.
\end{equation}
\begin{equation}\label{e35c}
\int_0^{\infty} \frac{d\theta}{\theta}\, \cos(a\theta)\tanh(\beta\theta)=\log\coth\big(a\pi/(4\beta)\big),
\end{equation}
where $\Re \beta$, respectively $\Im \beta$, denotes the real, respectively imaginary, part of $\beta$.

Finally, formula 5.6.3 in \cite{E} reads
\begin{equation}\label{e35d}
L^{-1}\{s^{-1}e^{-(as)^{1/2}}\}(t)=\frac{2}{\pi^{1/2}}\int_{(a/(4t))^{1/2}}^{ \infty}dr\,e^{-r^2}.
\end{equation}

\section{Proof of Theorem \ref{the3}}\label{sec2}
\noindent{\it Proof of Theorem \ref{the3}.}

As described in Section \ref{sec4}, we compute
\begin{equation*}
\int_{0}^{R} da_1 \, a_1 \int_{0}^{\infty} da_2 \, a_2 \int_{0}^{\alpha} d\alpha_1 \int_{0}^{\alpha} d\alpha_2
\, \hat{p}_{W_{2\pi}}(A_1,A_2;s),
\end{equation*}
where
 \begin{align*}
\hat{p}_{W_{2\pi}}(A_1,A_2;s)&=\frac{1}{\pi^2}\int_0^{\infty}d\theta K_{i\theta}(\sqrt s a_1)K_{i\theta}(\sqrt s a_2)\nonumber \\ &\times\left(\cosh((\pi-|\alpha_1-\alpha_2|)\theta)-\frac{\sinh(\pi\theta)}{\sinh(2\pi \theta)}\left(\cosh((2\pi-\alpha_1-\alpha_2)\theta)+\cosh((\alpha_1-\alpha_2)\theta)\right)\right),
\end{align*}
and then take the inverse Laplace transform.
A straightforward computation shows
\begin{align}\label{e33}
\int_0^{\alpha}d\alpha_1\int_0^{\alpha}d\alpha_2&\bigg(\cosh((\pi-|\alpha_1-\alpha_2|)\theta)-\frac{\sinh(\pi\theta)}{\sinh(2\pi \theta)}\bigg(\cosh((2\pi-\alpha_1-\alpha_2)\theta)+\cosh((\alpha_1-\alpha_2)\theta)\bigg)\bigg)\nonumber \\ &
=\frac{2\alpha}{\theta}\sinh(\pi\theta)+\frac{1}{2\theta^2\cosh(\pi\theta)}\big(3-3\cosh(2\pi\theta)\big)\nonumber \\ &\ \ \ +
\frac{1}{2\theta^2\cosh(\pi\theta)}\big(4\cosh((2\pi-\alpha)\theta)-\cosh((2\pi-2\alpha)\theta)-3\big)\nonumber \\ &:=C_1+C_2+C_3,
\end{align}
with obvious notation.

We obtain by definition of $C_1$, Fubini's theorem, \eqref{e34}
and \eqref{e35},
\begin{align*}
\int_0^Rda_1&\, a_1\int_0^{\infty}da_2\, a_2\frac{1}{\pi^2}\int_0^{\infty}d\theta K_{i\theta}(\sqrt s a_1)K_{i\theta}(\sqrt s a_2)C_1\nonumber \\ &
=\frac{2\alpha }{\pi s}\int_0^Rda_1\, a_1\int_0^{\infty}d\theta\, K_{i\theta}(\sqrt sa_1)\cosh(\pi \theta/2)\nonumber \\ &
=\frac{\alpha R^2}{2s}.
\end{align*}

So $L^{-1}\big \{
(2s)^{-1}\alpha R^2\big \}
(t)= 2^{-1}\alpha R^2$, which is the first term in right-hand side of \eqref{e20}.

Furthermore, by Fubini's theorem, \eqref{e34}, and the definition of $C_2$ in \eqref{e33}, we find
\begin{align}\label{e37}
\int_0^Rda_1&\, a_1\int_0^{\infty}da_2\, a_2\frac{1}{\pi^2}\int_0^{\infty}d\theta K_{i\theta}(\sqrt s a_1)K_{i\theta}(\sqrt s a_2)C_2\nonumber \\ &
=-\frac{3}{\pi^2}\int_0^Rda_1\, a_1\int_0^{\infty}da_2\, a_2\int_0^{\infty}\frac{d\theta}{\theta^2} K_{i\theta}(\sqrt s a_1)K_{i\theta}(\sqrt s a_2)\tanh(\pi\theta) \sinh(\pi\theta)
\nonumber \\ &
=-\frac{3}{\pi}\int_0^Rda_1\,a_1\int_0^{\infty}\frac{d\theta}{\theta s}K_{i\theta}(\sqrt s a_1)
\tanh(\pi\theta)\cosh(\pi\theta/2)
 \nonumber \\ &
=-\frac{3}{\pi}\int_0^Rda_1\,a_1\int_0^{\infty}\frac{d\theta}{\theta s}K_{i\theta}(\sqrt s a_1)\bigg(\sinh(\pi\theta/2)+\frac{\sinh(\pi\theta/2)}{\cosh(\pi\theta)}\bigg).
\end{align}
See also (2.9) in \cite{MvdBSS2}.
By \eqref{e35a}, and Fubini's theorem (see (2.10) in \cite{MvdBSS2}),
\begin{align}\label{e38}
-\frac{3}{\pi s}\int_0^{\infty}\frac{d\theta}{\theta}K_{i\theta}(\sqrt s a_1)\sinh(\pi\theta/2)&=-\frac{3}{\pi s}\int_0^{\pi/2}d\eta \int_{0}^{\infty} d\theta\, \cosh(\eta\theta)K_{i\theta}(\sqrt s a_1)\nonumber \\ &=-\frac{3}{2s}\int_0^{\pi/2}d\eta e^{-a_1\sqrt s\cos\eta}.
\end{align}
By \eqref{e38}, we obtain for the first term in the right-hand side of \eqref{e37}
\begin{align}\label{e39}
-\frac{3}{\pi}&\int_0^Rda_1\,a_1\int_0^{\infty}\frac{d\theta}{\theta s}K_{i\theta}(\sqrt s a_1)\sinh(\pi\theta/2)\nonumber \\ &=-\frac{3}{2s}\int_0^Rda\, a\int_0^{\pi/2}d\eta e^{-a\sqrt s\cos\eta}.
\end{align}
From the calculation in (2.14) of \cite{MvdBSS2}, we find that the inverse Laplace transform of the right-hand side of \eqref{e39} is given by
\begin{equation*}
-\frac{3Rt^{1/2}}{\pi^{1/2}}+\frac{3Rt^{1/2}}{\pi^{1/2}}\int_1^{\infty}\frac{dv}{v^2}\int_0^1\frac{\zeta d\zeta}{(1-\zeta^2)^{1/2}}e^{-R^2\zeta^2v^2/(4t)}.
\end{equation*}
For the second term in the right-hand side of \eqref{e37}, by Fubini's theorem and \eqref{e34}, we have
\begin{align}\label{e41}
-\frac{3}{\pi}&\int_0^Rda_1\,a_1\int_0^{\infty}\frac{d\theta}{\theta s}K_{i\theta}(\sqrt s a_1)\frac{\sinh(\pi\theta/2)}{\cosh(\pi\theta)}\nonumber \\ &=
-\frac{3}{2s^2
 }\int_0^{\infty}\frac{d\theta}{\cosh(\pi\theta)}+\frac{3}{\pi}\int_R^{\infty}da_1\,a_1\int_0^{\infty}\frac{d\theta}{\theta s}K_{i\theta}(\sqrt s a_1)\frac{\sinh(\pi\theta/2)}{\cosh(\pi\theta)}\nonumber \\ &=-\frac{3}{4s^2}+\frac{3}{\pi}\int_R^{\infty}da_1\,a_1\int_0^{\infty}\frac{d\theta}{\theta s}K_{i\theta}(\sqrt s a_1)\frac{\sinh(\pi\theta/2)}{\cosh(\pi\theta)}.
\end{align}
Taking the inverse Laplace transform of the first term in the right-hand side of \eqref{e41} yields $-\frac34 t$, which accounts for the $-\frac34$ term in \eqref{e19}.

By Fubini's theorem and \eqref{e42}, we obtain
\begin{align*}
\frac{3}{\pi}\int_R^{\infty}da\,a&\int_0^{\infty}\frac{d\theta}{\theta s}K_{i\theta}(\sqrt s a)\frac{\sinh(\pi\theta/2)}{\cosh(\pi\theta)}\nonumber \\ &=\frac{3}{\pi}\int_R^{\infty}da\,a\int_0^{\infty} dw e^{-\sqrt s a\cosh w}\int_0^{\infty}\frac{d\theta}{\theta s} \cos(w\theta) \frac{\sinh(\pi\theta/2)}{\cosh(\pi\theta)}.
\end{align*}
By \eqref{e35d},
\begin{equation}\label{e44}
L^{-1}\big\{
s^{-1}e^{-\sqrt s a\cosh w}\big\}(t)=\frac{2}{\sqrt \pi}\int_{(a\cosh w)/(4t)^{1/2}}^{\infty}dr\,e^{-r^2}.
\end{equation}
Hence the inverse Laplace transform of the second term in the right-hand side of \eqref{e41}
is bounded in absolute value by
\begin{align}\label{e45}
\frac{3}{\pi}\bigg|\int_R^{\infty}da&\,a\int_0^{\infty} dw \textup{Erfc}\,((a\cosh w)/(4t)^{1/2})\int_0^{\infty}\frac{d\theta}{\theta} \cos(w\theta)\frac{\sinh(\pi\theta/2)}{\cosh(\pi\theta)}\bigg|\nonumber \\ & \le \frac{3}{\pi}\int_R^{\infty}da\,a\int_0^{\infty} dw \textup{Erfc}\,((a\cosh w)/(4t)^{1/2})\int_0^{\infty}\frac{d\theta}{\theta}\frac{\sinh(\pi\theta/2)}{\cosh(\pi\theta)}\nonumber \\ &=\frac{3\log(1+\sqrt 2)}{\pi}\int_0^{\infty} dw \int_R^{\infty}da\,a\textup{Erfc}\,((a\cosh w)/(4t)^{1/2}),
\end{align}
where we have used \eqref{e35b}. Since for $z\ge0$,
\begin{equation}\label{e46}
\textup{Erfc}(z):=\frac{2}{\sqrt \pi}\int_z^{\infty}dr\,e^{-r^2}= \frac{2}{\sqrt \pi}\int_0^{\infty}dr\,e^{-(z+r)^2}\le \frac{2}{\sqrt \pi}\int_0^{\infty}dr\,e^{-z^2-r^2}=e^{-z^2},
\end{equation}
we obtain that the right-hand side of \eqref{e45} is bounded from above by
\begin{align}\label{e47}
\frac{3\log(1+\sqrt 2)}{\pi}\int_0^{\infty} dw\, \int_R^{\infty}da\,a\,e^{-(a\cosh w)^2/(4t)}&=\frac{6t\log(1+\sqrt 2)}{\pi}\int_0^{\infty}
\frac{dw}{(\cosh w)^2}e^{-(R\cosh w)^2/(4t)}\nonumber \\ &=O(te^{-R^2/(4t)}).
\end{align}

In order to compute $C_3$, we extend the integral with respect to $a_1$ to the interval $[0,\infty)$, and obtain, via Fubini's theorem and \eqref{e34},
\begin{align*}
\int_0^{\infty}da_1&\, a_1\int_0^{\infty}da_2\, a_2\frac{1}{\pi^2}\int_0^{\infty}d\theta K_{i\theta}(\sqrt s a_1)K_{i\theta}(\sqrt s a_2)C_3\nonumber \\ &=
\int_0^{\infty}da_1\, a_1\int_0^{\infty}da_2\, a_2\frac{1}{\pi^2}\int_0^{\infty}d\theta K_{i\theta}(\sqrt s a_1)K_{i\theta}(\sqrt s a_2)\nonumber \\ &\,\,\,\, \,\,\,\,\times\frac{1}{2\theta^2\cosh(\pi\theta)}\big(4\cosh((2\pi-\alpha)\theta)-\cosh((2\pi-2\alpha)\theta)-3\big)\nonumber \\ &=
\frac{1}{8s^2}\int_0^{\infty}d\theta
\, \frac{4\cosh((2\pi-\alpha)\theta)-\cosh((2\pi-2\alpha)\theta)-3}{\cosh(\pi\theta)(\sinh(\pi\theta/2))^2}\nonumber \\ &=\frac{1}{4s^2}\int_0^{\infty}d\theta\, \frac{4(\sinh((\pi-\frac{\alpha}{2})\theta))^2-(\sinh((\pi-\alpha)\theta))^2}{ \big(\sinh(\pi\theta/2)\big)^2 \cosh(\pi\theta)}.
\end{align*}
Inverting the Laplace transform yields a contribution $\big(\frac34+a(\alpha)\big)t,$
where $a(\alpha)$ is as defined in \eqref{e19}. This, together with the statement below \eqref{e41}
gives the contribution $a(\alpha)t$ in \eqref{e20}.

It remains to bound the inverse Laplace transform of
\begin{align}\label{e49}
\int_R^{\infty}da_1&\, a_1\int_0^{\infty}da_2\, a_2\frac{1}{\pi^2}\int_0^{\infty}d\theta K_{i\theta}(\sqrt s a_1)K_{i\theta}(\sqrt s a_2)\nonumber \\ &\,\,\,\,\times\frac{1}{2\theta^2\cosh(\pi\theta)}\big(4\cosh((2\pi-\alpha)\theta)-\cosh((2\pi-2\alpha)\theta)-3\big)\nonumber \\ &
=\frac{1}{ s}\int_R^{\infty}da\, a\int_0^{\infty}d\theta\, K_{i\theta}(\sqrt s a)\frac{4\cosh((2\pi-\alpha)\theta)-\cosh((2\pi-2\alpha)\theta)-3}{4\pi\theta\sinh(\pi\theta/2)\cosh(\pi\theta)}.
\end{align}
We first consider the case $\pi/2<\alpha<7\pi/4$, and we proceed as above. We use \eqref{e42}, and invert the Laplace transform of $s^{-1}e^{-\sqrt s a\cosh w}$ as in \eqref{e44}. This gives that the inverse Laplace transform of \eqref{e49} equals
\begin{align}\label{e50}
\int_R^{\infty}da\,a\int_0^{\infty} dw\, \textup{Erfc}\,((a\cosh w)/(4t)^{1/2})\int_0^{\infty}d\theta\,(\cos(w\theta))\frac{4\cosh((2\pi-\alpha)\theta)
-\cosh((2\pi-2\alpha)\theta)-3}{4\pi\theta\sinh(\pi\theta/2)\cosh(\pi\theta)}.
\end{align}
Using $|\cos(w\theta)|\le 1$, we find that the absolute value of the expression under \eqref{e50} is bounded from above by
\begin{align}\label{e51}
\int_R^{\infty}da\,a\int_0^{\infty} dw\, \textup{Erfc}\,((a\cosh w)/(4t)^{1/2})\int_0^{\infty}&d\theta\,\frac{4\cosh((2\pi-\alpha)\theta)-\cosh((2\pi-2\alpha)\theta)-3}{4\pi\theta\sinh(\pi\theta/2)\cosh(\pi\theta)}\nonumber \\ &=O(te^{-R^2/(4t)}),
\end{align}
where, as before, we have used \eqref{e46}, and argued similarly to \eqref{e47}.
We note that the integrals with respect to $\theta$ in \eqref{e50} and \eqref{e51} converge for $\pi/2<\alpha<7\pi/4$.

We next consider the case $7\pi/4<\alpha<2\pi$. We write the right-hand side of \eqref{e49}
as the sum of two terms, say $D_1(s)+D_2(s)$, where
\begin{align}\label{e52}
D_1(s)=\frac{1}{ s}\int_R^{\infty}da\, a\int_0^{\infty}d\theta K_{i\theta}(\sqrt s a)\frac{\cosh((2\pi-\alpha)\theta)-1}{\pi\theta\sinh(\pi\theta/2)\cosh(\pi\theta)},
\end{align}
and
\begin{equation}\label{e53}
D_2(s)=\frac{1}{s}\int_R^{\infty}da\, a\int_0^{\infty}d\theta K_{i\theta}(\sqrt s a)\frac{1-\cosh((2\pi-2\alpha)\theta)}{4\pi\theta\sinh(\pi\theta/2)\cosh(\pi\theta)}.
\end{equation}
Using \eqref{e42}, \eqref{e44} gives
\begin{align}\label{e54}
\big|L^{-1}\{D_1\}
(t) \big|&=
\bigg|\int_R^{\infty}da\,a\int_0^{\infty} dw\, \textup{Erfc}\,((a\cosh w)/(4t)^{1/2})\int_0^{\infty}d\theta\, \cos(w\theta)\frac{\cosh((2\pi-\alpha)\theta)-1}{\pi\theta\sinh(\pi\theta/2)\cosh(\pi\theta)}\bigg|\nonumber \\ &\le
\int_R^{\infty}da\,a\int_0^{\infty} dw\, \textup{Erfc}\,((a\cosh w)/(4t)^{1/2})\int_0^{\infty}d\theta\,\frac{\cosh((2\pi-\alpha)\theta)-1}{\pi\theta\sinh(\pi\theta/2)\cosh(\pi\theta)}\nonumber \\ &=O(te^{-R^2/(4t)}),
\end{align}
where we have used \eqref{e46}, and argued similarly to \eqref{e47}. The integral with respect to $\theta$ in \eqref{e54} converges for $\alpha\in(\pi/2,2\pi)  \supset
(7\pi/4,2\pi)$.
To invert $D_2(s)$ we rewrite the integrand as follows. For $\epsilon\in \R$,
\begin{align*}
&\frac{1-\cosh((2\pi-2\alpha)\theta)}{4\pi\theta\sinh(\pi\theta/2)\cosh(\pi\theta)}\nonumber \\ &
=\frac{4\cosh(\pi\theta/2)-2\cosh((2\alpha-\frac{3\pi}{2})\theta)-2\cosh((2\alpha-\frac{5\pi}{2})\theta)}{4\pi\theta\sinh(2\pi\theta)}\nonumber \\ &
=\frac{4\cosh(\pi\theta/2)-2\cosh((2\alpha-\frac{3\pi}{2})\theta)-2\cosh((2\alpha-
\frac{5\pi}{2})\theta)+2\cosh((2\pi+\epsilon)\theta)-2\cosh((2\pi-\epsilon)\theta)}{4\pi\theta\sinh(2\pi\theta)}\nonumber \\ &
-\frac{1}{\pi\theta}\sinh(\epsilon\theta).
\end{align*}
We choose $2\alpha-\frac{3\pi}{2}=2\pi+\epsilon$. This gives that $\epsilon=2\alpha-\frac{7\pi}{2}$, and
\begin{align}\label{e56}
\frac{1-\cosh((2\pi-2\alpha)\theta)}{4\pi\theta\sinh(\pi\theta/2)\cosh(\pi\theta)}&=\frac{2\cosh(\pi\theta/2)-\cosh((2\alpha-
\frac{5\pi}{2})\theta)-\cosh((\frac{11\pi}{2}-2\alpha)\theta)}{2\pi\theta\sinh(2\pi\theta)}\nonumber \\ &
\,\,\,\,-\frac{1}{\pi\theta}\sinh((4\alpha-7\pi)\theta/2).
\end{align}
The first term in the right-hand side of \eqref{e56} is integrable, and, analogously to the above, we proceed with \eqref{e42}, \eqref{e44}, and \eqref{e46}. This gives a remainder
$O(te^{-R^2/(4t)}).$

It remains to invert the contribution coming from the second term in the right-hand side of \eqref{e56}. We recall (2.18) in \cite{MvdBSS2}. That is, for $-\frac{\pi}{2}<\beta<\frac{\pi}{2}$, by Fubini's theorem, \eqref{e35a}, and \eqref{e35d}, we have
\begin{align}\label{e57}
L^{-1}\bigg\{\int_R^{\infty}da\, a\int_0^{\infty}\frac{d\theta}{\theta s}K_{i\theta}(\sqrt s a)\sinh(\beta\theta)\bigg\}(t)
&=L^{-1}\bigg\{\int_R^{\infty}da\, \frac{a}{s}\int_0^{\beta}d\eta\,\int_0^{\infty}d\theta \cosh(\eta \theta)K_{i\theta}(\sqrt s a)\bigg\}(t)\nonumber \\ &
=L^{-1}\bigg\{\frac{\pi}{2s}\int_R^{\infty}da\, a\int_0^{\beta}d\eta\,e^{-a\sqrt s \cos\eta}\bigg\}(t)\nonumber \\ &=\frac{\pi}{2}\int_R^{\infty}da\,a\int_0^{\beta}d\eta\, \textup{Erfc}\bigg(\frac{a\cos \eta}{(4t)^{1/2}}\bigg)\nonumber \\ &\le  \frac{\pi\beta}{2}\int_R^{\infty}da\, a\textup{Erfc}\bigg(\frac{a\cos \beta}{(4t)^{1/2}}\bigg)\nonumber\\ &=O(te^{-R^2(\cos \beta)^2/(4t)}),
\end{align}
where we have used once more \eqref{e46}.

For $7\pi/4<\alpha<2\pi$ we have that $2\alpha-\frac{7\pi}{2}\in (0,\pi/2)$. Hence the second term in the right-hand side of \eqref{e56} gives a contribution $O(te^{-R^2(\sin(2\alpha))^2/(4t)})$.

For $\alpha=\frac{7\pi}{4}$, we have
\begin{equation*}
\int_0^{\infty} d\theta\,\bigg|\frac{\cosh((2\pi-\alpha)\theta)-1}{\pi\theta\sinh(\pi\theta/2)\cosh(\pi\theta)}\bigg|<\infty.
\end{equation*}
Hence the inverse Laplace transform of $D_1$ is $O(t e^{-R^2/(4t)})$.
For $\alpha=7\pi/4$ we rewrite \eqref{e53} as
\begin{align*}
D_2&=\frac{1}{s}\int_R^{\infty}da\, a\int_0^{\infty}d\theta K_{i\theta}(\sqrt s a)\frac{\cosh(\pi\theta/2)-\frac12\cosh(\pi\theta)-\frac12\cosh(2\pi\theta)}{\pi\theta\sinh(2\pi\theta)}\nonumber \\ &
=\frac{1}{s}\int_R^{\infty}da\, a\int_0^{\infty}d\theta K_{i\theta}(\sqrt s a)\bigg\{\frac{\cosh(\pi\theta/2)-\frac12\cosh(\pi\theta)-\frac12
 }{\pi\theta\sinh(2\pi\theta)}-\frac{\tanh(\pi\theta)}{2\pi\theta}\bigg\}.
\end{align*}
Since
\begin{equation*}
\int_0^{\infty} d \theta \bigg|\frac{\cosh(\pi\theta/2)-\frac12\cosh(\pi\theta)-\frac12}{\pi\theta\sinh(2\pi\theta)}\bigg|<\infty,
\end{equation*}
we have that this part also gives
a contribution  $O(te^{-R^2/(4t)})$. By \eqref{e35c} and \eqref{e35d} (see (2.20) in \cite{MvdBSS2}), we have for $\beta>0$,
\begin{align}\label{e61}
L^{-1}\bigg\{\frac{1}{\pi s}&\int_R^{\infty}da\, a\int_0^{\infty}\frac{d\theta}{\theta}\tanh(\beta\theta) K_{i\theta}(\sqrt s a)\bigg\}(t)\nonumber \\ &=
L^{-1}\bigg\{\frac{1}{\pi s}\int_R^{\infty}da\, a\int_0^{\infty}dw\, e^{-a\sqrt s\cosh w}\int_0^{\infty}\frac{d\theta}{\theta}\tanh(\beta\theta)\cos(w\theta) \bigg\}(t)\nonumber \\ &=L^{-1}\bigg\{\frac{1}{\pi s}\int_R^{\infty}da\, a\int_0^{\infty}dw\, e^{-a\sqrt s\cosh w}\log(\coth(\pi w/(4\beta)))\bigg\}(t) \nonumber \\ &
=\frac{1}{\pi}\int_R^{\infty}da\, a \int_0^{\infty}dw\,\textup{Erfc}\bigg(\frac{a\cosh w}{(4t)^{1/2}}\bigg)\log(\coth(\pi w/(4\beta))).
\end{align}
By \eqref{e46} we obtain that \eqref{e61} is bounded from above by
\begin{equation*}
2\pi^{-1}te^{-R^2/(4t)}\int_0^{\infty}  dw \, \frac{\log(\coth(\pi w/(4\beta)))}{(\cosh w)^2}=O(te^{-R^2/(4t)}).
\end{equation*}
In particular, for $\beta = \pi$, the term $-\frac{\tanh(\pi\theta)}{2\pi\theta}$ contributes
a remainder $O(te^{-R^2/(4t)})$.

For $\alpha=\pi/2$, we have
\begin{equation*}
\int_0^{\infty}\bigg|\frac{1-\cosh((2\pi-\alpha)\theta)}{4\pi\theta\sinh(\pi\theta/2)\cosh(\pi\theta)}\bigg|<\infty.
\end{equation*}
Hence the inverse Laplace transform of $D_2$ is, for $\alpha=\pi/2$, $O(te^{-R^2/(4t)})$. On the other hand, for $\alpha=\pi/2$ the integrand in \eqref{e52} equals the integrand of \eqref{e53} for $\alpha=7\pi/4$ up to a
factor of $-\frac14$. Hence the inverse Laplace transform  of $D_1$ is also $O(te^{-R^2/(4t)})$.

For $\pi/4<\alpha<\pi/2$, we have
\begin{equation*}
\int_0^{\infty}\bigg|\frac{1-\cosh((2\pi-2\alpha)\theta)}{4\pi\theta\sinh(\pi\theta/2)\cosh(\pi\theta)}\bigg|<\infty.
\end{equation*}
Hence the inverse Laplace transform of $D_2$ is, for $\pi/4<\alpha<\pi/2$, $O(te^{-R^2/(4t)})$.
Similarly to the above, we rewrite the hyperbolic part of the integrand in \eqref{e52} as follows:
\begin{align}\label{e65}
&\frac{\cosh((2\pi-\alpha)\theta)-1}{\pi\theta\sinh(\pi\theta/2)\cosh(\pi\theta)}\nonumber \\ &
\,\,\,\,=\frac{2\cosh((\frac{5\pi}{2}-\alpha)\theta)+2\cosh((\frac{3\pi}{2}-\alpha)\theta)-4\cosh(\pi\theta/2)}{\pi\theta\sinh(2\pi\theta)}\nonumber \\ &
\,\,\,\,=\frac{2\cosh((\frac{5\pi}{2}-\alpha)\theta)+2\cosh((\frac{3\pi}{2}-\alpha)\theta)-4\cosh(\pi\theta/2)-2\cosh((2\pi+\epsilon)\theta)
+2\cosh((2\pi-\epsilon)\theta)}{ \pi\theta\sinh(2\pi\theta)}\nonumber \\ &
\hspace{5mm}+\frac{ 4\sinh(\epsilon\theta)}{ \pi\theta}.
\end{align}
We subsequently choose $\epsilon=\frac{\pi}{2}-\alpha$. With this choice of $\epsilon$, the absolute value of the first term in the right-hand side of \eqref{e65}
is integrable with respect to $\theta$ on $\R^+$. Hence this term contributes $O(te^{-R^2/(4t)})$ to the inverse Laplace transform of the corresponding integral in \eqref{e52}. Moreover, since $\epsilon\in (0,\pi/2)$ for this case, we have by \eqref{e57} that this term contributes $O(te^{-R^2\sin^2(\alpha)/(4t)})$ to the inverse Laplace transform of the corresponding integral in \eqref{e52}.

We next consider the case $\alpha=\pi/4$. Then $D_2$ for $\pi/4$ equals $D_2$ for $7\pi/4$, we immediately conclude that this term is $O(te^{-R^2/(4t)})$.
We rewrite the hyperbolic part of the integrand as follows:
\begin{align}\label{e66}
&\frac{\cosh(7\pi\theta/4)-1}{\pi\theta\sinh(\pi\theta/2)\cosh(\pi\theta)}\nonumber \\ &\,\,=
\frac{2\cosh(9\pi\theta/4)+2\cosh(5\pi\theta/4)-2\cosh((2\pi+\epsilon)\theta)+2\cosh((2\pi-\epsilon)\theta)-4\cosh(\pi\theta/2)}{\pi\theta\sinh(2\pi\theta)}
+\frac{4\sinh(\epsilon\theta)}{\pi\theta}.
\end{align}
We subsequently choose $\epsilon=\frac{\pi}{4}$ and observe that the absolute value of the first term in the right-hand side of \eqref{e66} is integrable.
This then yields that the corresponding Laplace transform is $O(te^{-R^2/(4t)})$.
The second term has been inverted in \eqref{e57}. Choosing $\beta=\pi/4$ gives a remainder
$O(te^{-R^2/(8t)})$.

We finally consider the case $0<\alpha<\pi/4$. The contribution from $D_1$ to the inverse Laplace transform can be estimated by \eqref{e65}, and the lines below, since
$\epsilon\in (0,\pi/2)$ for this case too. Hence we obtain a remainder $O(te^{-R^2(\sin\alpha)^2/(4t)})$. The contribution from $D_2$ to the inverse Laplace transform follows by a minor modification of \eqref{e65}. We  have
\begin{align}\label{e67}
&\frac{1-\cosh((2\pi-2\alpha)\theta)}{4\pi\theta\sinh(\pi\theta/2)\cosh(\pi\theta)}\nonumber \\ &
\,\,\,\,=\frac{\cosh(\pi\theta/2)-\frac12\cosh((\frac{5\pi}{2}-2\alpha)\theta)-\frac12\cosh((\frac{3\pi}{2}-2\alpha)\theta)+
\frac12\cosh((2\pi+\epsilon)\theta)-\frac12\cosh((2\pi-\epsilon)\theta)}{\pi\theta\sinh(2\pi\theta)}\nonumber \\& \hspace{5mm} -\frac{\sinh(\epsilon\theta)}{\pi\theta}.
\end{align}
We choose $\epsilon=\frac{\pi}{2}-2\alpha\in (0,\pi/2)$, and obtain the remainder $O(te^{-R^2\sin^2(2\alpha)/(4t)})$ from the corresponding integral in \eqref{e53} by \eqref{e57}. The first term in the right-hand side of \eqref{e67} gives $O(te^{-R^2/( 4t )})$.
\hspace*{\fill }$\square $
\bigskip


\end{document}